\documentclass[11pt,reqno]{amsart}
\usepackage{fullpage,times,graphicx,amssymb,amsmath,amsfonts,psfrag,xcolor,pdfsync}
\newtheorem{theorem}{Theorem}[section]
\newtheorem{proposition}[theorem]{Proposition}
\newtheorem{definition}[theorem]{Definition}
\newtheorem{lemma}[theorem]{Lemma}
\newtheorem{corollary}[theorem]{Corollary}
\renewenvironment{proof}{\textbf{Proof.}}{\QED\bigskip}

\usepackage{algorithmic,algorithm}

\usepackage[square]{natbib}

\definecolor{ddarkbrown}{rgb}{0.5,0.2,0.05} \definecolor{bbluegray}{rgb}{0.05,0,0.5}

\newcommand{\BEAS}{\begin{eqnarray*}}
\newcommand{\EEAS}{\end{eqnarray*}}
\newcommand{\BEA}{\begin{eqnarray}}
\newcommand{\EEA}{\end{eqnarray}}
\newcommand{\BEQ}{\begin{equation}}
\newcommand{\EEQ}{\end{equation}}
\newcommand{\BIT}{\begin{itemize}}
\newcommand{\EIT}{\end{itemize}}
\newcommand{\BNUM}{\begin{enumerate}}
\newcommand{\ENUM}{\end{enumerate}}

\newcommand{\BA}{\begin{array}}
\newcommand{\EA}{\end{array}}



\newcommand{\reals}{{\mathbb R}}
\newcommand{\complex}{{\mathbb C}}

\newcommand{\symm}{{\mbox{\bf S}}}  
\newcommand{\herm}{{\mbox{\bf H}}}  


\newcommand{\Range}{\mbox{\textrm{range}}}

\newcommand{\range}{{\mathcal R}}

\newcommand{\Rank}{\mathop{\bf Rank}}

\newcommand{\Card}{\mathop{\bf Card}}
\newcommand{\Tr}{\mathop{\bf Tr}}
\newcommand{\diag}{\mathop{\bf diag}}

\newcommand{\idm}{\mathbf{I}}

\newcommand{\Expect}{\textstyle\mathop{\bf E}}


\newcommand{\QED}{~~\rule[-1pt]{6pt}{6pt}}





\renewcommand\Re{\operatorname{Re}}
\renewcommand\Im{\operatorname{Im}}

\usepackage{caption}
\usepackage{stmaryrd}

\newcommand{\C}{{\mathbb C}}
\newcommand{\R}{{\mathbb R}}

\newtheorem{remark}[theorem]{Remark}
\newtheorem*{lemma*}{Lemma}

\usepackage[colorlinks,citecolor=bbluegray,linkcolor=ddarkbrown,urlcolor=blue,breaklinks]{hyperref}

\usepackage{auto-pst-pdf} 



\begin{document}
\title{Phase Recovery,  MaxCut\\ and Complex Semidefinite Programming}

\author{Ir\`ene Waldspurger}
\address{D.I., \'Ecole Normale Sup\'erieure, Paris.}
\email{waldspur@clipper.ens.fr}

\author{Alexandre d'Aspremont} 
\address{CNRS \& D.I., \'Ecole Normale Sup\'erieure, Paris. UMR 8548.}
\email{alexandre.daspremont@m4x.org}

\author{St\'ephane Mallat}
\address{D.I., \'Ecole Normale Sup\'erieure, Paris.}
\email{mallat@cmap.polytechnique.fr}

\keywords{Phase Recovery, MaxCut, Semidefinite Programming, Convex Relaxation, Scattering.}
\date{\today}
\subjclass[2010]{94A12, 90C22, 90C27.}

\begin{abstract}
Phase retrieval seeks to recover a signal $x \in \complex^p$ from the amplitude $|A x|$ of linear measurements~$Ax \in \complex^n$. We cast the phase retrieval problem as a non-convex quadratic program over a complex phase vector and formulate a tractable relaxation (called \ref{eq:ph-SDP}) similar to the classical {\em MaxCut} semidefinite program. We solve this problem using a provably convergent block coordinate descent algorithm whose structure is similar to that of the original greedy algorithm in \cite{gerchberg}, where each iteration is a matrix vector product. Numerical results show the performance of this approach over three different phase retrieval problems, in comparison with greedy phase retrieval algorithms and matrix completion formulations.
\end{abstract}
\maketitle

\section{Introduction}
The phase recovery problem, i.e. the problem of reconstructing a complex phase vector given only the magnitude of linear measurements, appears in a wide range of engineering and physical applications. It is needed for example in X-ray and crystallography imaging \citep{Harr93}, diffraction imaging \citep{Bunk07} or microscopy \citep{Miao08}. In these applications, the detectors cannot measure the phase of the incoming wave and only record its amplitude. Recovering the complex phase of wavelet transforms from their amplitude also has applications in audio signal processing \citep{Grif84}.

In all these problems, complex measurements of a signal $x \in \complex^p$ are obtained from a linear injective operator $A$, but we can only measure the magnitude vector $|A x|$. Depending on the properties of $A$, the phase of $A x$ may or may not be uniquely characterized by the magnitude vector $|A x|$, up to an additive constant, and it may or may not be stable. For example, if $A$ is a one-dimensional Fourier transform, then the recovery is not unique but it becomes unique almost everywhere for an oversampled two-dimensional Fourier transform, although it is not stable. Uniqueness is also obtained with an oversampled wavelet transform operator $A$, and the recovery of $x$ from $|Ax|$ is then continuous \citep{Wald12}. If $A$ consists in random Gaussian measurements then uniqueness can be proved, together with stability results \citep{Cand11, Cand12}.

Recovering the phase of $Ax$ from $|A x|$ is a nonconvex optimization problem. Until recently, this problem was solved using various greedy algorithms \citep{gerchberg,Fien82,Grif84}, which alternate projections on the range of $A$ and on the nonconvex set of vectors $y$ such that $|y| = |A x|$. However, these algorithms often stall in local minima. A convex relaxation called \ref{eq:ph-lift} was introduced in \citep{Chai11} and \citep{Cand11} by observing that $|A x|^2$ is a linear function of $X = x x^*$ which is a rank one Hermitian matrix. The recovery of $x$ is thus expressed as a rank minimization problem over positive semidefinite Hermitian matrices $X$ satisfying some linear conditions. This last problem is approximated by a semidefinite program. It has been shown that this program recovers $x$ with high probability when $A$ has gaussian independant entries \citep{Cand11,Cand11a}. Numerically, the same result seems to hold for several classes of linear operators~$A$.

Our main contribution here is to formulate phase recovery as a quadratic optimization problem over the unit complex torus. We then write a convex relaxation to phase recovery very similar to the {\em MaxCut} semidefinite program (we call this relaxation \ref{eq:ph-SDP} in what follows). While the resulting SDP is typically larger than the \ref{eq:ph-lift} relaxation, its simple structure (the constraint matrices are singletons) allows us to solve it very efficiently. In particular, this allows us to use a provably convergent block coordinate descent algorithm whose iteration complexity is similar to that of the original greedy algorithm in \cite{gerchberg} (each iteration is a matrix vector product, which can be computed efficiently). We also show that tightness of \ref{eq:ph-lift} implies tightness of a modified version of \ref{eq:ph-SDP}. Furthermore, under the condition that $A$ is injective and $b$ has no zero coordinate, we derive an equivalence result between \ref{eq:ph-lift} and a modified version of \ref{eq:ph-SDP}, in the noiseless setting. This result implies that both algorithms are simultaneously tight (an earlier version of this paper showed \ref{eq:ph-lift} tightness implies \ref{eq:ph-SDP} tightness and the reverse direction was then proved in \citep{Voro12a} under mild additional assumptions). In a noisy setting, one can show that \ref{eq:ph-SDP} is at least as stable as a variant of \ref{eq:ph-lift}, while~\ref{eq:ph-SDP} empirically appears to be more stable in some cases, e.g. when $b$ is sparse.

Seeing the {\em MaxCut} relaxation emerge in a phase recovery problem is not entirely surprising: it appears, for example, in an angular synchronization problem where one seeks to reconstruct a sequence of angles~$\theta_i$ (up to a global phase), given information on pairwise differences $\theta_i-\theta_j$ mod. $2\pi$, for $(i,j)\in S$ \citep[see][]{Sing11}, the key difference between this last problem and the phase recovery problem in~\eqref{eq:ph-recov} is that the sign information is lost in the input to~\eqref{eq:ph-recov}. 
Complex {\em MaxCut}-like relaxations of decoding problems also appear in maximum-likelihood channel detection \citep{Luo03,Kisi10,So10}. From a combinatorial optimization perspective, showing the equivalence between phase recovery and {\em MaxCut} allows us to expose a new class of nontrivial problem instances where the semidefinite relaxation for a {\em MaxCut}-like problem is tight, together with explicit conditions for tightness directly imported from the matrix completion formulation of these problems (these conditions are of course also hard to check, but hold with high probability for some classes of random experiments).

The paper is organized as follows. Section \ref{s:problem} explains how to factorize away the magnitude information to form a nonconvex quadratic program on the phase vector $u \in \complex^n$ satisfying $|u_i| = 1$ for $i=1,\ldots,n$, and a greedy algorithm is derived in Section~\ref{s:greedy}. We then derive a tractable relaxation of the phase recovery problem, written as a semidefinite program similar to the classical {\em MaxCut} relaxation in \citep{goem95}, and detail several algorithms for solving this problem in Section~\ref{s:algos}. Section~\ref{s:completion} proves that a variant of \ref{eq:ph-SDP} and \ref{eq:ph-lift} are equivalent in the noiseless case and thus simultaneously tight. We also prove that \ref{eq:ph-SDP} is as stable as a weak version of \ref{eq:ph-lift} and discuss the relative complexity of both algorithms. Finally, Section \ref{s:numres} performs a numerical comparison between the greedy, \ref{eq:ph-lift} and \ref{eq:ph-SDP} phase recovery algorithms for three phase recovery problems, in the noisy and noiseless case. In the noisy case, these results suggest that if $b$ is sparse, then \ref{eq:ph-SDP} may be more stable than \ref{eq:ph-lift}.

\subsection*{Notations}
We write $\symm_p$ (resp. $\herm_p$) the cone of symmetric (resp. Hermitian) matrices of dimension $p$ ; $\symm_p^+$ (resp. $\herm_p^+$) denotes the set of positive symmetric (resp. Hermitian) matrices. We write $\|\cdot\|_p$ the Schatten $p$-norm of a matrix, that is the $p$-norm of the vector of its eigenvalues (in particular, $\|\cdot\|_\infty$ is the spectral norm). We write $A^\dag$  the (Moore-Penrose) pseudoinverse of a matrix $A$ and $\|A\|_{\ell_1}$ the sum of the modulus of the coefficients of $A$. For $x\in\reals^p$, we write $\diag(x)$ the matrix with diagonal $x$. When $X\in\herm_p$ however, $\diag(X)$ is the vector containing the diagonal elements of $X$. For $X\in\herm_p$, $X^*$ is the {H}ermitian transpose of $X$, with $X^*=(\bar X)^T$. Finally, we write $b^2$ the vector with components $b_i^2$, $i=1,\ldots,n$.

\vskip 6ex
\section{Phase recovery}\label{s:problem}
The phase recovery problem seeks to retrieve a signal $x \in \complex^p$ from the amplitude $b=|A x|$ of $n$ linear measurements, solving 
\BEQ\label{eq:ph-recov}
\BA{ll}
\mbox{find} & x\\
\mbox{such that} & |Ax|= b,
\EA\EEQ
in the variable $x\in\complex^p$, where $A\in\complex^{n \times p}$ and $b\in\reals^n$.

\subsection{Greedy Optimization in the Signal}\label{s:greedy-sig}
Approximate solutions $x$ of the recovery problem in~\eqref{eq:ph-recov} are usually computed from $b = |A x|$ using algorithms inspired from the alternating projection method in \citep{gerchberg}. These algorithms compute iterates $y^k$ in the set $\bf F$ of vectors $y\in\complex^n$ such that $|y| = b = |Ax|$, which are getting progressively closer to the image of $A$. The \ref{alg:GS} algorithm projects the current iterate $y^k$ on the image of $A$ using the orthogonal projector $A A^\dag$ and adjusts to $b_i$ the amplitude of each coordinate. We describe this method explicitly below.

\begin{algorithm}[ht]
\caption{Gerchberg-Saxton.}
\begin{algorithmic} [1]
\REQUIRE An initial $y^1\in\mathbf{F}$, i.e. such that $|y^1|=b$.
\FOR{$k=1,\ldots,N-1$}
\STATE Set
\BEQ\label{alg:GS}\tag{Gerchberg-Saxton}
y_i^{k+1} = b_i\, \frac{(A A^\dag y^k)_i} {|(A A^\dag y^k)_i|},\quad i=1,\ldots,n.
\EEQ
\ENDFOR
\ENSURE $y_N \in \mathbf{F}$.
\end{algorithmic} 
\end{algorithm} 

Because $\bf F$ is not convex however, this alternating projection method usually converges to a stationary point $y^\infty$ which does not belong to the intersection of $\bf F$ with the image of $A$, and hence $|A A^\dag y^\infty| \neq b$. Several modifications proposed in \citep{Fien82} improve the convergence rate but do not eliminate the existence of multiple stationary points. To guarantee convergence to a unique solution,
which hopefully belongs to the intersection of $\bf F$ and the image of $A$, this non-convex optimization problem has recently been relaxed as a semidefinite program \citep{Chai11,Cand11}, where phase recovery is formulated as a matrix completion problem (described in Section \ref{s:completion}). Although the computational complexity of this relaxation is much higher than that of the~\ref{alg:GS} algorithm, it is able to recover $x$ from $|A x|$ (up to a multiplicative constant) in a number of cases \citep{Chai11,Cand11}.

\subsection{Splitting Phase and Amplitude Variables}
As opposed to these strategies, we solve the phase recovery problem by explicitly separating the amplitude and phase variables, and by only optimizing the values of the phase variables. In the noiseless case, we can write $Ax = \diag(b)u$ where $u \in \complex^n$ is a phase vector, satisfying $|u_i| = 1$ for $i=1,\ldots,n$. Given $b = |A x|$, the phase recovery problem can thus be written as
\[
\min_{\substack{u\in\complex^n,\,|u_i|=1,\\x\in\complex^p}} ~\|Ax-\diag(b)u\|_2^2,
\]
where we optimize over both variables $u\in\complex^n$ and $x\in\complex^p$. In this format, the inner minimization problem in $x$ is a standard least squares and can be solved explicitly by setting
\[
x=A^\dag\diag(b)u,
\]
which means that problem~\eqref{eq:ph-recov} is equivalent to the reduced problem
$$
\min_{\substack{|u_i|=1\\u\in\complex^n}} ~ \|AA^\dag\diag(b)u-\diag(b)u\|_2^2.
$$
The objective of this last problem can be rewritten as follows
\BEAS
\|AA^\dag\diag(b)u-\diag(b)u\|_2^2 &=& \|(AA^\dag-\idm)\diag(b)u\|_2^2\\
&=& u^*\diag(b^T)\tilde M\diag(b)u.
\EEAS
where $\tilde M=(AA^\dag-\idm)^*(AA^\dag-\idm)=\idm-AA^\dag$. Finally, the phase recovery problem~\eqref{eq:ph-recov} becomes
\BEQ\label{eq:ph-partit}
\BA{ll}
\mbox{minimize} & u^*Mu\\
\mbox{subject to} & |u_i|=1,\quad i=1,\ldots n,
\EA\EEQ
in the variable $u\in\complex^n$, where the Hermitian matrix
\[
M=\diag(b)(\idm-AA^\dag)\diag(b)
\] 
is positive semidefinite. The intuition behind this last formulation is simple, $(\idm-AA^\dag)$ is the orthogonal projector on the orthogonal complement of the image of~$A$ (the kernel of $A^*$), so this last problem simply minimizes in the phase vector $u$ the norm of the component of $\diag(b)u$ which is not in the image of~$A$.

\subsection{Greedy Optimization in Phase}\label{s:greedy}

Having transformed the phase recovery problem~\eqref{eq:ph-recov} in the quadratic minimization problem~\eqref{eq:ph-partit}, suppose that we are given an initial vector $u\in\complex^n$, and focus on optimizing over a single component $u_i$ for $i=1,\ldots,n$. The problem is equivalent to solving
$$
\BA{ll}
\mbox{minimize} & \bar u_i M_{ii}u_i + 2\Re\left(\sum_{j\neq i}  \bar u_j M_{ji} u_i\right)\\
\mbox{subject to} & |u_i|=1,\quad i=1,\ldots n,
\EA$$
in the variable $u_i\in\complex$ where all the other phase coefficients $u_j$ remain constant. Because $|u_i|=1$ this then amounts to solving
$$
\min_{|u_i|=1}~\Re\left(u_i\sum_{j\neq i}  M_{ji} \bar u_j\right)
$$
which means
\BEQ \label{eq:greedy-coef}
u_i=\frac{-\sum_{j\neq i} M_{ji} \bar u_j}{\left| \sum_{j\neq i} M_{ji} \bar u_j \right|}
\EEQ
for each $i=1,\ldots,n$, when $u$ is the optimum solution to problem~\eqref{eq:ph-partit}. We can use this fact to derive Algorithm~\ref{alg:greedy}, a greedy algorithm for optimizing the phase problem.

\begin{algorithm}[ht]  
\caption{Greedy algorithm in phase.\label{alg:greedy}} 
\begin{algorithmic} [1]
\REQUIRE An initial $u\in\complex^n$ such that $|u_i|=1$, $i=1,\ldots,n$. An integer $N>1$.
\FOR{$k=1,\ldots,N$}
\FOR {$i = 1,\ldots n$}
\STATE Set
$$
u_i=\frac{-\sum_{j\neq i} M_{ji} \bar u_j}{\left| \sum_{j\neq i} M_{ji} \bar u_j\right|}
$$
\ENDFOR
\ENDFOR
\ENSURE $u\in\complex^n$ such that $|u_i|=1$, $i=1,\ldots,n$.
\end{algorithmic} 
\end{algorithm} 

This greedy algorithm converges to a stationary point $u^\infty$, but it is generally not a global solution of problem~\eqref{eq:ph-partit}, and hence $|A A^\dag \diag(u^\infty) b| \neq b$. It has often nearly the same stationary points as the \ref{alg:GS} algorithm. One can indeed verify that if $u^\infty$ is a stationary point then $y^\infty = \diag(u^\infty) b$ is a stationary point of the \ref{alg:GS} algorithm. Conversely if $b$ has no zero coordinate and $y^\infty$ is a stable stationary point of the \ref{alg:GS} algorithm then $u_i^\infty = y_i^\infty / |y_i^\infty|$ defines a stationary point of the greedy algorithm in phase.

If $A x$ can be computed with a fast algorithm using $O( n \log n)$ operations, which is the case for Fourier or wavelets transform operators for example, then each~\ref{alg:GS} iteration is computed with $O(n \log n)$ operations. The greedy phase algorithm above does not take advantage of this fast algorithm and requires $O(n^2)$ operations to update all coordinates $u_i$ for each iteration $k$. However, we will see in Section~\ref{s:struct} that a small modification of the algorithm allows for $O( n \log n)$ iteration complexity.

\subsection{Complex {{\em MaxCut}}}
Following the classical relaxation argument in \citep{Shor87,Lova91,goem95,Nest98b}, we first write $U=uu^*\in\herm_n$. Problem~\eqref{eq:ph-partit}, written
\[\BA{rll}
QP(M)  \triangleq &\mbox{min.} & u^*Mu\\
& \mbox{subject to} & |u_i|=1,\quad i=1,\ldots n,
\EA\]
in the variable $u\in\complex^n$, is equivalent to 
\[
\BA{ll}
\mbox{min.} & \Tr(UM)\\
\mbox{subject to} & \diag(U)=1\\
& U\succeq 0,\,\Rank(U)=1,
\EA\]
in the variable $U\in\herm_n$. After dropping the (nonconvex) rank constraint, we obtain the following convex relaxation
\BEQ\label{eq:ph-SDP}\tag{PhaseCut}
\BA{rll}
SDP(M)  \triangleq & \mbox{min.} & \Tr(UM)\\
& \mbox{subject to} & \diag(U)=1,\,U\succeq 0,
\EA\EEQ
which is a semidefinite program (SDP) in the matrix $U\in\herm_n$ and can be solved efficiently. When the solution of problem~\ref{eq:ph-SDP} has rank one, the relaxation is tight and the vector $u$ such that $U=uu^*$ is an optimal solution of the phase recovery problem~\eqref{eq:ph-partit}. If the solution has rank larger than one, a normalized leading eigenvector $v$ of $U$ is used as an approximate solution, and $\diag(U-vv^T)$ gives a measure of the uncertainty around the coefficients of $v$.

In practice, semidefinite programming solvers are rarely designed to directly handle problems written over Hermitian matrices and start by reformulating complex programs in $\herm_n$ as real semidefinite programs over~$\symm_{2n}$ based on the simple facts that follow. For $Z,Y\in\herm_n$, we define ${\mathcal T}(Z)\in\symm_{2n}$ as in \citep{Goem04}
\BEQ\label{eq:T-op}
{\mathcal T}(Z)=
\left(\BA{cc}
\Re(Z) & -\Im(Z)\\
\Im(Z) & \Re(Z)
\EA\right)
\EEQ
so that $\Tr({\mathcal T}(Z){\mathcal T}(Y))=2\Tr(ZY)$. By construction, $Z\in\herm_n$ iff ${\mathcal T}(Z)\in\symm_{2n}$. One can also check that $z=x+iy$ is an eigenvector of $Z$ with eigenvalue $\lambda$ if and only if
\[
\left(\BA{c}
x\\
y
\EA\right)
~~\mbox{and}~~
\left(\BA{c}
-y\\
x
\EA\right)
\]
are eigenvectors of ${\mathcal T}(Z)$, both with eigenvalue $\lambda$ (depending on the normalization of $z$, one corresponds to $(\Re(z),\Im(z))$, the other one to $(\Re(i\,z),\Im(i\,z))$. This means in particular that $Z\succeq 0$ if and only if ${\mathcal T}(Z)\succeq 0$. 

We can use these facts to formulate an equivalent semidefinite program over real symmetric matrices, written
\[\BA{ll}
\mbox{minimize} & \Tr({\mathcal T}(M)X)\\
\mbox{subject to} & X_{i,i}+X_{n+i,n+i}=2\\
& X_{i,j} = X_{n+i,n+j},\,X_{n+i,j} = -X_{i,n+j}, \quad i,j=1,\ldots,n,\\
& X\succeq 0,
\EA\]
in the variable $X$ in $\symm_{2n}$. This last problem is equivalent to~\ref{eq:ph-SDP}. In fact, because of symmetries in ${\mathcal T}(M)$, the equality constraints enforcing symmetry can be dropped, and this problem is equivalent to a {\em MaxCut} like problem in dimension $2n$, which reads
\BEQ\label{eq:sdp-tau}
\BA{ll}
\mbox{minimize} & \Tr({\mathcal T}(M)X)\\
\mbox{subject to} & \diag(X)=1, X\succeq 0,
\EA\EEQ
in the variable $X$ in $\symm_{2n}$. As we will see below, formulating a relaxation to the phase recovery problem as a complex {\em MaxCut}-like semidefinite program has direct computational benefits. 

\vskip 6ex
\section{Algorithms}\label{s:algos}
In the previous section, we have approximated the phase recovery problem~\eqref{eq:ph-partit} by a convex relaxation, written
\[
\BA{ll}
\mbox{minimize} & \Tr(UM)\\
\mbox{subject to} & \diag(U)=1,\,U\succeq 0,
\EA\]
which is a semidefinite program in the matrix $U\in\herm_n$. The dual, written
\BEQ\label{eq:dual-ph-SDP}
\max_{w\in\reals^n}~n \lambda_{\mathrm{min}}(M+\diag(w))-1^Tw,
\EEQ
is a minimum eigenvalue maximization problem in the variable $w\in\reals^n$. Both primal and dual can be solved efficiently. When exact phase recovery is possible, the optimum value of the primal problem~\ref{eq:ph-SDP} is zero and we must have $\lambda_{\mathrm{min}}(M)=0$, which means that $w=0$ is an optimal solution of the dual.

\subsection{Interior Point Methods}
For small scale problems, with $n\sim10^2$, generic interior point solvers such as SDPT3 \citep{Toh96} solve problem~\eqref{eq:sdp-tau} with a complexity typically growing as $O\left(n^{4.5}\log(1/\epsilon)\right)$ where $\epsilon>0$ is the target precision \citep[\S4.6.3]{Bent01}. Exploiting the fact that the $2n$ equality constraints on the diagonal in~\eqref{eq:sdp-tau} are singletons, \citet{Helm96} derive an interior point method for solving the {\em MaxCut} problem, with complexity growing as $O\left(n^{3.5}\log(1/\epsilon)\right)$ where the most expensive operation at each iteration is the inversion of a positive definite matrix, which costs $O(n^3)$ flops.

\subsection{First-Order Methods}
When $n$ becomes large, the cost of running even one iteration of an interior point solver rapidly becomes prohibitive. However, we can exploit the fact that the dual of problem~\eqref{eq:sdp-tau} can be written (after switching signs) as a maximum eigenvalue minimization problem. Smooth first-order minimization algorithms detailed in \citep{Nest04a} then produce an $\epsilon$-solution after
\[
O\left(\frac{n^3\sqrt{\log n}}{\epsilon}\right)
\]
floating point operations. Each iteration requires forming a matrix exponential, which costs $O(n^3)$ flops. This is not strictly smaller than the iteration complexity of specialized interior point algorithms, but matrix structure often allows significant speedup in this step. Finally, the simplest subgradient methods produce an $\epsilon$-solution in
\[
O\left(\frac{n^2\log n}{\epsilon^2}\right)
\]
floating point operations. Each iteration requires computing a leading eigenvector which has complexity roughly $O(n^2\log n)$.

\subsection{Block Coordinate Descent}
We can also solve the semidefinite program in~\ref{eq:ph-SDP} using a block coordinate descent algorithm. While no explicit complexity bounds are available for this method in our case, the algorithm is particularly simple and has a very low cost per iteration (it only requires computing a matrix vector product). We write $i^c$ the index set $\{1,\ldots,i-1,i+1,\ldots,n\}$ and describe the method as Algorithm~\ref{alg:block}.

Block coordinate descent is widely used to solve statistical problems where the objective is separable (LASSO is a typical example) and was shown to  efficiently solve semidefinite programs arising in covariance estimation \citep{dAsp06b}. These results were extended by \citep{Wen12} to a broader class of semidefinite programs, including {\em MaxCut}. We briefly recall its simple construction below, applied to a barrier version of the {\em MaxCut} relaxation~\ref{eq:ph-SDP}, written
\BEQ \label{eq:bar-sdp}
\BA{ll}
\mbox{minimize} & \Tr(UM)- \mu \log\det(U) \\
\mbox{subject to} & \diag(U)=1
\EA\EEQ
which is a semidefinite program in the matrix $U\in\herm_n$, where $\mu>0$ is the barrier parameter. As in interior point algorithms, the barrier enforces positive semidefiniteness and the value of $\mu>0$ precisely controls the distance between the optimal solution to~\eqref{eq:bar-sdp} and the optimal set of~\ref{eq:ph-SDP}. We refer the reader to \citep{Boyd03} for further details. The key to applying coordinate descent methods to problems penalized by the $\log\det(\cdot)$ barrier is the following block-determinant formula
\BEQ\label{eq:block-det}
\det(U)=\det(B) \, \det(y-x^TB^{-1}x),
\quad \mbox{when}\quad
U=\left(\BA{cc}
B & x\\
x^T & y\\
\EA\right), \quad U \succ 0.
\EEQ
This means that, all other parameters being fixed, minimizing the function $\det(X)$ in the row and column block of variables $x$, is equivalent to minimizing the quadratic form $y-x^TZ^{-1}x$, arguably a much simpler problem. Solving the semidefinite program~\eqref{eq:bar-sdp} row/column by row/column thus amounts to solving the simple problem~\eqref{eq:block-pb} described in the following lemma.

\begin{lemma}\label{lem:block}
Suppose  $\sigma>0$, $c\in\reals^{n-1}$, and $B\in\symm_{n-1}$ are such that $b\neq 0$ and $B\succ 0$, then the optimal solution of the block problem
\BEQ\label{eq:block-pb}
\min_x ~ c^Tx - \sigma \log(1-x^TB^{-1}x)
\EEQ
is given by
\[
x=\frac{\sqrt{\sigma^2+\gamma}-\sigma}{\gamma}Bc
\]
where $\gamma=c^TBc$.
\end{lemma}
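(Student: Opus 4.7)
The plan is to use strict convexity of $f(x) = c^Tx - \sigma\log(1 - x^TB^{-1}x)$ on its natural domain $D = \{x : x^TB^{-1}x < 1\}$ to reduce the lemma to solving $\nabla f(x) = 0$, and then exploit the rank-one structure of the gradient equation to collapse the vector problem to a scalar quadratic.

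First, I would establish existence and uniqueness of the minimizer. Since $B \succ 0$, the quadratic $x^TB^{-1}x$ is strictly convex on $\reals^{n-1}$; composing with the strictly convex, increasing function $t \mapsto -\log(1-t)$ on $(-\infty,1)$ preserves strict convexity, and adding the affine term $c^Tx$ keeps $f$ strictly convex on $D$. Because $f(x) \to +\infty$ as $x \to \partial D$ and $D$ is bounded, the minimum is attained at a unique interior critical point.

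Next, I would set $\nabla f(x) = c + \frac{2\sigma B^{-1}x}{1 - x^TB^{-1}x} = 0$ and clear the positive denominator to obtain the stationarity equation $(1 - x^TB^{-1}x)\,c + 2\sigma B^{-1}x = 0$. Applying $B$ to both sides shows that $x$ must lie on the line spanned by $Bc$; writing $x = \alpha Bc$ and using $\gamma = c^TBc$ (with $c \ne 0$, which I read as the intended content of the $b \ne 0$ hypothesis), substitution collapses the vector equation to a scalar quadratic of the form $\gamma\alpha^2 - 2\sigma\alpha - 1 = 0$ with roots $\alpha = (\sigma \pm \sqrt{\sigma^2+\gamma})/\gamma$.

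The final step is to pick the admissible root by checking the feasibility inequality $\alpha^2\gamma < 1$ and then substitute into $x = \alpha Bc$. The main technical obstacle I anticipate is a careful sign audit: the sign enters both when clearing the denominator in the stationarity equation and when selecting which root of the scalar quadratic keeps $x$ strictly inside the ellipsoid $D$, and landing precisely on the expression $x = \frac{\sqrt{\sigma^2+\gamma}-\sigma}{\gamma}Bc$ from the lemma requires tracking these signs scrupulously --- an off-by-a-sign at any stage yields either an infeasible critical point or the negative of the stated minimizer, so this bookkeeping, rather than any deeper convex-analytic issue, is the place where the proof has to be written out meticulously.
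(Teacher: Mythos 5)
Your method is exactly the one the paper intends: its proof of Lemma~\ref{lem:block} is a one-line appeal to the first-order optimality conditions (following \citealt{Wen12}), and your strict-convexity and coercivity argument, followed by collapsing the stationarity equation onto the line spanned by $Bc$, is the right way to fill that in (your reading of the hypothesis ``$b\neq 0$'' as ``$c\neq 0$'' is also correct). However, there is a genuine gap at precisely the step you deferred: you never carry out the root selection, and doing so does \emph{not} land on the stated expression. From $\nabla f(x)=c+\frac{2\sigma B^{-1}x}{1-x^TB^{-1}x}=0$ you get $x=\alpha Bc$ with $1-\alpha^2\gamma+2\sigma\alpha=0$, i.e. $2\sigma\alpha=\alpha^2\gamma-1$; since feasibility forces $\alpha^2\gamma<1$ and $\sigma>0$, this already yields $\alpha<0$ before the quadratic is even solved. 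The feasible root of $\gamma\alpha^2-2\sigma\alpha-1=0$ is $\alpha=(\sigma-\sqrt{\sigma^2+\gamma})/\gamma<0$ (one checks $\alpha^2\gamma<1$ holds for this root and fails for the other), so the unique minimizer is
\[
x=-\,\frac{\sqrt{\sigma^2+\gamma}-\sigma}{\gamma}\,Bc,
\]
the \emph{negative} of the formula in the lemma. A cheap sanity check confirms this: along $x=\alpha Bc$ the barrier term is even in $\alpha$ while $c^Tx=\alpha\gamma$ is odd, so the objective at the claimed point (which has $\alpha>0$ since $\gamma=c^TBc>0$) strictly exceeds its value at the reflected point; the claimed point is feasible but not stationary.

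So what you labelled a ``scrupulous bookkeeping'' issue is in fact a sign error in the statement itself, and your proof, once completed, should say so rather than attempt to reach the stated formula --- as written, your last paragraph promises an outcome the computation cannot deliver. The discrepancy is harmless for the paper: Algorithm~\ref{alg:block} applies the update with an explicit minus sign, $X^{k+1}_{i^c,i}=-\sqrt{(1-\nu)/\gamma}\,x$ with $x=X^k_{i^c,i^c}M_{i^c,i}$, which matches the corrected minimizer (note also that $(\sqrt{\sigma^2+\gamma}-\sigma)/\gamma\to 1/\sqrt{\gamma}$ as $\sigma\to 0^+$, consistent with the algorithm's coefficient for small $\nu$). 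But a proof of the lemma must either terminate in the corrected formula or flag the typo; completing your root-selection step honestly is what closes the gap.
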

\begin{proof}
As in \citep{Wen12}, a direct consequence of the first order optimality conditions for~\eqref{eq:block-pb}.
\end{proof}

Here, we see problem~\eqref{eq:bar-sdp} as an unconstrained minimization problem over the off-diagonal coefficients of~$U$, and~\eqref{eq:block-det} shows that each block iteration amounts to solving a minimization subproblem of the form~\eqref{eq:block-pb}. Lemma~\ref{lem:block} then shows that this is equivalent to computing a matrix vector product. 
Linear convergence of the algorithm is guaranteed by the result in~\citep[\S9.4.3]{Boyd03} and the fact that the function $\log\det$ is strongly convex over compact subsets of the positive semidefinite cone. So the complexity of the method is bounded by $O\left(\log\frac{1}{\epsilon}\right)$ but the constant in this bound depends on $n$ here, and the dependence cannot be quantified explicitly.

\begin{algorithm}[ht]  
\caption{Block Coordinate Descent Algorithm for {\bf PhaseCut}.\label{alg:block}} 
\begin{algorithmic} [1]
\REQUIRE An initial $X^0=\idm_n$ and $\nu>0$ (typically small). An integer $N>1$.
\FOR{$k=1,\ldots,N$}
\STATE Pick $i\in[1,n]$.
\STATE Compute
\[
x=X^k_{i^c,i^c}M_{i^c,i}
\quad \mbox{and}\quad
\gamma=x^* M_{i^c,i}
\]
\STATE If $\gamma>0$, set
\[
X^{k+1}_{i^c,i}=X^{k+1 *}_{i,i^c}=-\sqrt{\frac{1-\nu}{\gamma}}x
\]
else
\[
X^{k+1}_{i^c,i}=X^{k+1 *}_{i,i^c}=0.
\]
\ENDFOR
\ENSURE A matrix $X\succeq 0$ with $\diag(X)=1$.
\end{algorithmic} 
\end{algorithm}

\subsection{Initialization \& Randomization}
Suppose the Hermitian matrix $U$ solves the semidefinite relaxation~\ref{eq:ph-SDP}. As in \citep{Goem04,Ben-03,Zhan06,So07}, we generate complex Gaussian vectors $x\in\complex^n$ with $x\sim\mathcal{N}_\complex(0,U)$, and for each sample $x$, we form $z\in\complex^n$ such that
\[
z_i=\frac{x_i}{|x_i|},\quad i=1,\ldots,n.
\]
All the sample points $z$ generated using this procedure satisfy $|z_i|=1$, hence are feasible points for problem~\eqref{eq:ph-partit}. This means in particular that $QP(M)\leq\Expect[z^*Mz]$. In fact, this expectation can be computed almost explicitly, using
\[
\Expect[zz^*]=F(U),\quad \mbox{with} \quad F(w)=\frac{1}{2}e^{i\,\mathrm{arg}(w)} \displaystyle \int_0^\pi \cos(\theta) \arcsin(|w| \cos(\theta)) d\theta
\]
where $F(U)$ is the matrix with coefficients $F(U_{ij})$, $i,j=1,\ldots,n$. We then get
\BEQ\label{eq:approx-e}
SDP(M) \leq QP(M) \leq \Tr(MF(U))
\EEQ
In practice, to extract good candidate solutions from the solution $U$ to the SDP relaxation in~\ref{eq:ph-SDP}, we sample a few points from $\mathcal{N}_\complex(0,U)$, normalize their coordinates and simply pick the point which minimizes~$z^*Mz$.

This sampling procedure also suggests a simple spectral technique for computing rough solutions to problem~\ref{eq:ph-SDP}: compute an eigenvector of $M$ corresponding to its lowest eigenvalue and simply normalize its coordinates (this corresponds to the simple bound on {\em MaxCut} by \citep{Delo93}). The information contained in $U$ can also be used to solve a robust formulation \citep{Ben09} of problem~\eqref{eq:ph-recov} given a Gaussian model $u\sim\mathcal{N}_\complex(0,U)$.

\subsection{Approximation Bounds} \label{ss:approx}
The semidefinite program in~\ref{eq:ph-SDP} is a {\em MaxCut}-type graph partitioning relaxation whose performance has been studied extensively. Note however that most approximation results for {\em MaxCut} study {\em maximization} problems over positive semidefinite or nonnegative matrices, while we are {\em minimizing} in~\ref{eq:ph-SDP} so, as pointed out in \citep{Kisi10,So10a} for example, we do not inherit the constant approximation ratios that hold in the classical {\em MaxCut} setting. 


\subsection{Exploiting Structure}\label{s:struct}
In some instances, we have additional structural information on the solution of problems~\eqref{eq:ph-recov} and~\eqref{eq:ph-partit}, which usually reduces the complexity of approximating~\ref{eq:ph-SDP} and improves the quality of the approximate solutions. We briefly highlight a few examples below.

\subsubsection{Symmetries}
In some cases, e.g. signal processing examples where the signal is symmetric, the optimal solution $u$ has a known symmetry pattern. For example, we might have $u(k_- - i)=u(k_+ +i)$ for some $k_-,k_+$ and indices $i\in[0,k_--1]$. This means that the solution $u$ to problem~\eqref{eq:ph-recov} can be written $u=Pv$, where $v\in\complex^q$ with $q<n$, and we can solve~\eqref{eq:ph-recov} by focusing on the smaller problem
\[\BA{ll}
\mbox{minimize} & v^*P^*MPv\\
\mbox{subject to} & |(Pv)_i|=1,\quad i=1,\ldots n,
\EA\]
in the variable $v\in\complex^q$. We reconstruct a solution $u$ to~\eqref{eq:ph-recov} from a solution $v$ to the above problem as $u=Pv$. This produces significant computational savings.

\subsubsection{Alignment}
In other instances, we might have prior knowledge that the phases of certain samples are aligned, i.e. that there is an index set $I$ such that $u_i=u_j, \quad \mbox{for all } i,j\in I$, this reduces to the symmetric case discussed above when the phase is arbitrary. W.l.o.g., we can also fix the phase to be one, with $u_i=1$ for $i\in I$, and solve a constrained version of the relaxation~\ref{eq:ph-SDP}
\[\BA{ll}
\mbox{min.} & \Tr(UM)\\
\mbox{subject to} & U_{ij}=1,\quad i,j\in I,\\
& \diag(U)=1,\,U\succeq 0,
\EA\]
which is a semidefinite program in $U\in\herm_n$.

\subsubsection{Fast {F}ourier transform}
If the product $Mx$ can be computed with a fast algorithm in $O( n \log n)$ operations, which is the case for Fourier or wavelet transform operators, we significantly speed up the iterations of Algorithm~\ref{alg:block} to update all coefficients at once. Each iteration of the modified Algorithm~\ref{alg:block} then has cost~$O( n \log n)$ instead of~$O(n^2)$.

\subsubsection{Real valued signal\label{sss:real_valued_signal}}
In some cases, we know that the solution vector $x$ in \eqref{eq:ph-recov} is real valued. Problem~\eqref{eq:ph-recov} can be reformulated to explicitly constrain the solution to be real, by writing it
\[
\min_{\substack{u\in\complex^n,\,|u_i|=1,\\x\in\reals^p}} ~\|Ax-\diag(b)u\|_2^2
\]
or again, using the operator ${\mathcal T}(\cdot)$ defined in~\eqref{eq:T-op}
\[\BA{ll}
\mbox{minimize} & \left\| {\mathcal T} (A)
\left(\BA{c} x \\ 0 \EA\right) 
-
\diag\left(\BA{c} b \\ b \EA\right)
\left(\BA{c}\Re(u) \\ \Im(u)\EA\right)
\right\|_2^2\\
\mbox{subject to} & u\in\complex^n,\,|u_i|=1\\
& x\in\reals^p.
\EA\]
The optimal solution of the inner minimization problem in $x$ is given by $x=A_2^\dag B_2v$, where
\[
A_2=\left(\BA{c} \Re(A) \\ \Im(A) \EA\right),
\quad
B_2=\diag\left(\BA{c} b \\ b \EA\right),
\quad \mbox{and} \quad
v = \left(\BA{c}\Re(u) \\ \Im(u)\EA\right)
\] 
hence the problem is finally rewritten
\[\BA{ll}
\mbox{minimize} & \| (A_2 A_2^\dag B_2- B_2)v\|_2^2\\
\mbox{subject to} & v_i^2+v_{n+i}^2=1,\quad i=1,\ldots,n,
\EA\]
in the variable $v\in\reals^{2n}$. This can be relaxed as above by the following problem
\[\BA{ll}
\mbox{minimize} & \Tr(VM_2)\\
\mbox{subject to} & V_{ii}+V_{n+i,n+i}=1,\quad i=1,\ldots,n,\\
& V \succeq 0,
\EA\]
which is a semidefinite program in the variable $V\in\symm_{2n}$, where $M_2=(A_2 A_2^\dag B_2- B_2)^T(A_2 A_2^\dag B_2- B_2)=B_2^T(\idm-A_2A_2^\dag)B_2$.

\vskip 6ex
\section{Matrix completion \& exact recovery conditions}\label{s:completion}
In \citep{Chai11,Cand11}, phase recovery~\eqref{eq:ph-recov} 
is cast as a matrix completion problem. We briefly review this approach and compare it with the semidefinite program in \ref{eq:ph-SDP}. Given a signal vector $b\in\reals^n$ and a sampling matrix $A\in\complex^{n \times p}$, we look for a vector $x\in\complex^p$ satisfying
\[
|a_i^*x|=b_i, \quad i=1,\ldots,n,
\]
where the vector $a_i^*$ is the $i^{th}$ row of $A$ and $x\in\complex^p$ is the signal we are trying to reconstruct. The phase recovery problem is then written as
\[\BA{ll}
\mbox{minimize} & \Rank(X)\\
\mbox{subject to} & \Tr(a_ia_i^*X)=b_i^2,\quad i=1,\ldots,n\\
& X \succeq 0
\EA\]
in the variable $X\in\herm_p$, where $X=xx^*$ when exact recovery occurs. This last problem can be relaxed as
\BEQ\label{eq:ph-lift}
\BA{ll}
\mbox{minimize} & \Tr(X)\\
\tag{PhaseLift}
\mbox{subject to} & \Tr(a_ia_i^*X)=b_i^2,\quad i=1,\ldots,n\\
& X \succeq 0
\EA\EEQ
which is a semidefinite program (called \ref{eq:ph-lift} by \citet{Cand11}) in the variable $X\in\herm_p$. Recent results in \citep{Cand11a,Cand12} give explicit (if somewhat stringent) conditions on $A$ and $x$ under which the relaxation is tight (i.e. the optimal $X$ in~\ref{eq:ph-lift} is unique, has rank one, with leading eigenvector $x$).

\subsection{Weak Formulation\label{ss:weak_version}}
We also introduce a weak version of \ref{eq:ph-lift}, which is more directly related to \ref{eq:ph-SDP} and is easier to interpret geometrically. It was noted in \citep{Cand11a} that, when $\idm\in\mbox{span}\{a_ia_i^*\}_{i=1}^n$, the condition $\Tr(a_ia_i^*X)=b_i^2,i=1,...,n$ determines $\Tr(X)$, so in this case the trace minimization objective is redundant and \ref{eq:ph-lift} is equivalent to
\BEQ\label{eq:weak-ph-lift}
\BA{ll}
\mbox{find} & X\\
\tag{Weak PhaseLift}
\mbox{subject to} & \Tr(a_ia_i^*X)=b_i^2,\quad i=1,\ldots,n\\
& X \succeq 0.
\EA\EEQ
When $\idm\notin\mbox{span}\{a_ia_i^*\}_{i=1}^n$ on the other hand, \ref{eq:weak-ph-lift} and \ref{eq:ph-lift} are not equivalent: solutions of \ref{eq:ph-lift} solve \ref{eq:weak-ph-lift} too but the converse is not true. Interior point solvers typically pick a solution at the analytic center of the feasible set of~\ref{eq:weak-ph-lift} which in general can be significantly different from the minimum trace solution.

However, in practice, the removal of trace minimization does not really seem to alter the performances of the algorithm. We will illustrate this affirmation with numerical experiments in~\S\ref{ss:role_of_trace} and a formal proof is given in \citep{Dema12} who showed that, in the case of Gaussian random measurements, the relaxation of \ref{eq:weak-ph-lift} was tight with high probability under the same conditions as \ref{eq:ph-lift}.

\subsection{Phase Recovery as a Projection}\label{ss:geometric_point_of_view}
We will see in what follows that phase recovery can interpreted as a projection problem. These results will prove useful later to study stability. The \ref{eq:ph-SDP} reconstruction problem defined in~\ref{eq:ph-SDP} is written
\[\BA{ll}
\mbox{minimize} & \Tr(UM)\\
\mbox{subject to} & \diag(U)=1,\,U\succeq 0,
\EA\]
with $M=\diag(b)(\idm-AA^\dag)\diag(b)$. In what follows, we assume $b_i\ne 0$, $i=1,...,n$, which means that, after scaling $U$, solving \ref{eq:ph-SDP} is equivalent to solving
\BEQ\label{eq:scaled-pb}
\BA{ll}
\mbox{minimize} & \Tr(V(\idm-AA^\dag))\\
\mbox{subject to} & \diag(V)=b^2, V\succeq 0.
\EA\EEQ
In the following lemma, we show that this last semidefinite program can be understood as a projection problem on a section of the semidefinite cone using the trace (or nuclear) norm. We define
\[
\mathcal{F}=\{V\in\herm_n : x^*Vx=0,\forall x\in \range(A)^\perp\}
\]
which is also $\mathcal{F}=\{V\in\herm_n: (\idm-AA^\dag)V(\idm-AA^\dag)=0\}$, and we now formulate the objective of problem~\eqref{eq:scaled-pb} as a distance.

\begin{lemma}\label{lem:dist_to_F}
For all $V\in\herm_n$ such that $V\succeq 0$,
\begin{equation}
\Tr(V(\idm-AA^\dag))=d_1(V,\mathcal{F})
\end{equation}
where $d_1$ is the distance associated to the trace norm.
\end{lemma}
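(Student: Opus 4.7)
Denote $P=AA^\dag$ and $P^\perp=\idm-P$, the orthogonal projectors onto $\range(A)$ and $\range(A)^\perp$ respectively. Since $P^\perp$ is an idempotent Hermitian projector, the cyclic property gives
\[
\Tr(V(\idm-AA^\dag)) = \Tr(V P^\perp) = \Tr(P^\perp V P^\perp),
\]
and the set $\mathcal{F}$ is exactly $\{W\in\herm_n : P^\perp W P^\perp = 0\}$. The plan is to prove matching upper and lower bounds for $d_1(V,\mathcal{F})=\inf_{W\in\mathcal{F}}\|V-W\|_1$.

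For the upper bound, I exhibit an explicit candidate. Set $W := V - P^\perp V P^\perp$. A direct expansion using $P P^\perp = P^\perp P = 0$ shows $P^\perp W P^\perp = P^\perp V P^\perp - P^\perp V P^\perp = 0$, so $W\in\mathcal{F}$. Then $V-W = P^\perp V P^\perp$, and since $V\succeq 0$ this matrix is also positive semidefinite (it is $(P^\perp)^* V (P^\perp)$ with $V\succeq 0$). For a positive semidefinite Hermitian matrix the trace norm equals the ordinary trace, hence
\[
\|V-W\|_1 = \Tr(P^\perp V P^\perp) = \Tr(V P^\perp),
\]
which yields $d_1(V,\mathcal{F})\le \Tr(V(\idm-AA^\dag))$.

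For the lower bound, I use the duality between the Schatten $1$-norm and the operator norm: for any Hermitian $X$,
\[
\|X\|_1 = \sup_{\|Y\|_\infty\le 1}\,\Tr(XY).
\]
The projector $P^\perp$ satisfies $\|P^\perp\|_\infty=1$, so for every $W\in\mathcal{F}$,
\[
\|V-W\|_1 \;\ge\; \Tr\bigl((V-W)P^\perp\bigr) \;=\; \Tr(V P^\perp)-\Tr(P^\perp W P^\perp) \;=\; \Tr(V P^\perp),
\]
where the last equality uses $W\in\mathcal{F}$. Taking the infimum over $W\in\mathcal{F}$ gives the reverse inequality, and combining the two bounds yields the claim.

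There is no real obstacle here; the only subtlety is recognizing that the ``off-diagonal cleaning'' $W=V-P^\perp V P^\perp$ lies in $\mathcal{F}$ and that the residual $P^\perp V P^\perp$ inherits positive semidefiniteness from $V$, which is what allows the trace norm of the residual to collapse to a plain trace and match the dual lower bound attained by the test matrix $Y=P^\perp$.
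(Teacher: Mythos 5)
Your proof is correct, and it reaches the same candidate nearest point as the paper while certifying its optimality by a genuinely different route. In the paper's unitary basis adapted to $\range(A)\oplus\range(A)^\perp$, your $W=V-P^\perp VP^\perp$ is exactly the paper's projected matrix $T\left(\begin{smallmatrix}V_1&V_2\\V_2^*&0\end{smallmatrix}\right)T^{-1}$, i.e.\ $V$ with its $(\range A)^\perp$-block $V_3$ zeroed out, so the upper bounds coincide. The real difference is the lower bound: the paper simply asserts that this orthogonal projection attains $d_1(V,\mathcal{F})$, which is not automatic --- the Frobenius-orthogonal projection onto a subspace need not be the nearest point in \emph{trace} norm --- so the paper's argument tacitly relies on something like the pinching inequality $\|V-W'\|_1\geq\|P^\perp(V-W')P^\perp\|_1=\|P^\perp VP^\perp\|_1$ for all $W'\in\mathcal{F}$, which it does not spell out. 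Your duality step supplies exactly this missing certificate: testing $V-W'$ against $Y=P^\perp$ (operator norm at most one) and using $\Tr(W'P^\perp)=\Tr(P^\perp W'P^\perp)=0$ for $W'\in\mathcal{F}$ (idempotence plus cyclicity, which you use implicitly and could state in one line) gives $\|V-W'\|_1\geq\Tr(VP^\perp)$ in a basis-free way. As for what each approach buys: the paper's basis change makes the block structure of $\mathcal{F}$ visually transparent and sets up machinery reused later (the restrictions $V_{PC}^\sslash$ and $V_{PC}^\perp$ in Lemmas \ref{lem:ecart_V_PC} and \ref{lem:ecart_b} are these same blocks), while your version is coordinate-free, slightly more rigorous on the optimality step, and cleanly isolates the two facts doing all the work: positivity of $P^\perp VP^\perp$ collapsing $\|\cdot\|_1$ to $\Tr$, and Schatten $1$--$\infty$ duality.
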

\begin{proof}
Let $\mathcal{B}_1$ (resp. $\mathcal{B}_2$) be an orthonormal basis of $\Range\,A$ (resp. $(\Range\,A)^\perp$). Let $T$ be the transformation matrix from canonical basis to orthonormal basis $\mathcal{B}_1\cup\mathcal{B}_2$. Then 
\[
\mathcal{F}=\{V\in\herm_n\mbox{ s.t. }T^{-1}VT=\left(\begin{smallmatrix}S_1&S_2\\S_2^*&0\end{smallmatrix}\right), S_1\in\herm_p, S_2\in\mathcal{M}_{p,n-p}\}
\]
As the transformation $X\to T^{-1}XT$ preserves the nuclear norm, for every matrix $V\succeq 0$, if we write
\[
T^{-1}VT=\left(\begin{smallmatrix}V_1&V_2\\V_2^*&V_3\end{smallmatrix}\right)
\]
then the orthogonal projection of $V$ onto $\mathcal{F}$ is
\[
W=T\left(\begin{smallmatrix}V_1&V_2\\V_2^*&0\end{smallmatrix}\right)T^{-1},
\]
so $d_1(V,\mathcal{F})=\|V-W\|_1=\|\left(\begin{smallmatrix}0&0\\0&V_3\end{smallmatrix}\right)\|_1$. As $V\succeq 0$, $\left(\begin{smallmatrix}V_1&V_2\\V_2^*&V_3\end{smallmatrix}\right)\succeq 0$ hence $\left(\begin{smallmatrix}0&0\\0&V_3\end{smallmatrix}\right)\succeq 0$, so $d_1(V,\mathcal{F})=\Tr\left(\begin{smallmatrix}0&0\\0&V_3\end{smallmatrix}\right)$. Because $AA^\dag$ is the orthogonal projection onto $\range(A)$, we have
$T^{-1}(\idm-AA^\dag)T=\left(\begin{smallmatrix}0&0\\0&\idm\end{smallmatrix}\right)$ hence
\[
d_1(V,\mathcal{F})=\Tr\left(\begin{smallmatrix}0&0\\0&V_3\end{smallmatrix}\right)=\Tr((T^{-1}VT)(T^{-1}(\idm-AA^\dag)T))=\Tr(V(\idm-AA^\dag))
\]
which is the desired result.
\end{proof}

\begin{figure}
\includegraphics[width=.33\textwidth]{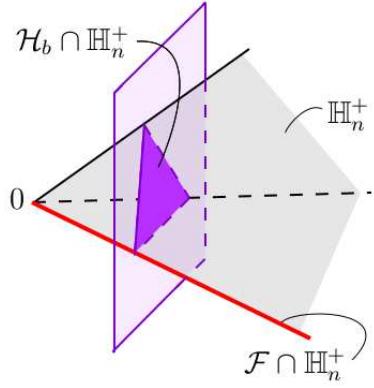} 
\caption{Schematic representation of the sets involved in equations \eqref{eq:ph-cut-geom} and \eqref{eq:ph-lift-geom} : the cone of positive hermitian matrices $\mathbb{H}_n^+$ (in light grey), its intersection with the affine subspace $\mathcal{H}_b$, and $\mathcal{F}\cap\mathbb{H}_n^+$, which is a face of $\mathbb{H}_n^+$.}
\label{geometrical_interpretation}
\end{figure}

This means that \ref{eq:ph-SDP} can be written as a projection problem, i.e.
\BEQ\label{eq:ph-cut-geom}
\BA{ll}
\mbox{minimize} & d_1(V,\mathcal{F})\\
\mbox{subject to} & V\in\herm_n^+\cap\mathcal{H}_b
\EA\EEQ
in the variable $V\in\herm_n$, where $\mathcal{H}_b=\{V\in\herm_n\mbox{ s.t. }V_{i,i}=b_i^2, i=1,...,n\}$. Moreover, with $a_i$ the $i$-th row of $A$, we have for all $X\in\herm_p^+$, $\Tr(a_ia_i^*X)=a_i^*Xa_i=\diag(AXA^*)_i, \quad i=1,\ldots,n$, so if we call $V=AXA^*\in\mathcal{F}$, when $A$ is injective, $X=A^\dag VA^{\dag *}$ and \ref{eq:weak-ph-lift} is equivalent to
\[
\BA{ll}
\mbox{find} & V\in\herm_n^+\cap\mathcal{F}\\
\mbox{subject to} & \diag(V)=b^2.
\EA\]
First order algorithms for \ref{eq:weak-ph-lift} will typically solve
\[
\BA{ll}
\mbox{minimize} & d(\diag(V),b^2)\\
\mbox{subject to} & V\in\herm_n^+\cap\mathcal{F}
\EA\]
for some distance $d$ over $\R^n$. If $d$ is the $l^s$-norm, for any $s\geq 1$, $d(\diag(V),b^2)=d_s(V,\mathcal{H}_b)$, where $d_s$ is the distance generated by the Schatten $s$-norm, the algorithm becomes
\BEQ\label{eq:ph-lift-geom}
\BA{ll}
\mbox{minimize} & d_s(V,\mathcal{H}_b)\\
\mbox{subject to} & V\in\herm_n^+\cap\mathcal{F}
\EA\EEQ
which is another projection problem in $V$.

Thus, \ref{eq:ph-SDP} and \ref{eq:weak-ph-lift} are comparable, in the sense that both algorithms aim at finding a point of $\mathbb{H}_n^+\cap\mathcal{F}\cap\mathcal{H}_b$ but \ref{eq:ph-SDP} does so by picking a point of $\mathbb{H}_n^+\cap\mathcal{H}_b$ and moving towards $\mathcal{F}$ while \ref{eq:weak-ph-lift} moves a point of $\mathbb{H}_n^+\cap\mathcal{F}$ towards $\mathcal{H}_b$. We can push the parallel between both relaxations much further. We will show in what follows that, in a very general case,~\ref{eq:ph-lift} and a modified version of~\ref{eq:ph-SDP} are simultaneously tight. We will also be able to compare the stability of \ref{eq:weak-ph-lift} and \ref{eq:ph-SDP} when measurements become noisy.

\subsection{Tightness of the Semidefinite Relaxation\label{ss:relaxation_tightness}}
We will now formulate a refinement of the semidefinite relaxation in~\ref{eq:ph-SDP} and prove that this refinement is equivalent to the relaxation in~\ref{eq:ph-lift} under mild technical assumptions. Suppose $u$ is the optimal phase vector, we know that the optimal solution to~\eqref{eq:ph-recov} can then be written $x=A^\dag \diag(b) u$, which corresponds to the matrix $X=A^\dag \diag(b) uu^* \diag(b) A^{\dag *}$ in~\ref{eq:ph-lift}, hence
\[
\Tr(X)=\Tr(\diag(b) A^{\dag *}A^\dag \diag(b) uu^*).
\]
Writing $B=\diag(b) A^{\dag *}A^\dag \diag(b)$, when problem~\eqref{eq:ph-recov} is solvable, we look for the ``minimum trace'' solution among all the optimal points of relaxation~\ref{eq:ph-SDP} by solving
\BEQ\label{eq:ph-SDP2}\tag{PhaseCutMod}
\BA{rll}
SDP2(M) \triangleq & \mbox{min.} & \Tr(BU)\\
& \mbox{subject to} & \Tr(MU)=0\\
& & \diag(U)=1,\, U \succeq 0,
\EA\EEQ
which is a semidefinite program in $U\in\herm_n$. When problem~\eqref{eq:ph-recov} is solvable, then every optimal solution of the semidefinite relaxation~\ref{eq:ph-SDP} is a feasible point of relaxation~\ref{eq:ph-SDP2}. In practice, the semidefinite program $SDP(M+\gamma B)$, written
\[
\BA{ll}
\mbox{minimize} & \Tr((M+\gamma B)U)\\
\mbox{subject to} & \diag(U)=1,\, U \succeq 0,
\EA\]
obtained by replacing $M$ by $M+\gamma B$ in problem~\ref{eq:ph-SDP}, will produce a solution to~\ref{eq:ph-SDP2} whenever $\gamma>0$ is sufficiently small (this is essentially the exact penalty method detailed in \cite[\S4.3]{Bert98} for example). This means that all algorithms (greedy or SDP) designed to solve the original~\ref{eq:ph-SDP} problem can be recycled to solve~\ref{eq:ph-SDP2} with negligible effect on complexity. We now show that the~\ref{eq:ph-SDP2} and~\ref{eq:ph-lift} relaxations are simultaneously tight when $A$ is injective. An earlier version of this paper showed \ref{eq:ph-lift} tightness implies \ref{eq:ph-SDP} tightness and the argument was reversed in \citep{Voro12a} under mild additional assumptions.

\begin{proposition}\label{prop:Phi}
Assume that $b_i \ne 0$ for $i=1,\ldots,n$, that $A$ is injective and that there is a solution $x$ to~\eqref{eq:ph-recov}. The function 
\BEAS
\Phi:\herm_p & \rightarrow & \herm_n\\
X & \mapsto & \Phi(X)=\diag(b)^{-1}AXA^*\diag(b)^{-1}
\EEAS
is a bijection between the feasible points of~\ref{eq:ph-SDP2} and those of~\ref{eq:ph-lift}.
\end{proposition}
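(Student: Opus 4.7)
The plan is to establish the bijection by verifying three properties of $\Phi$ in turn: it sends feasible points of \ref{eq:ph-lift} into feasible points of \ref{eq:ph-SDP2} (well-definedness), it is injective on this set, and it is surjective. The hypotheses are used as follows: $b_i \ne 0$ makes $\diag(b)^{-1}$ available, while injectivity of $A$ yields $A^\dag A = \idm_p$ via $A^\dag = (A^*A)^{-1} A^*$.

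For well-definedness I would take a feasible $X$ for \ref{eq:ph-lift} and observe that $\Phi(X) \succeq 0$ is automatic from $X \succeq 0$, that the $i$-th diagonal of $\Phi(X)$ equals $b_i^{-2} a_i^* X a_i = b_i^{-2} \Tr(a_i a_i^* X) = 1$, and finally that $M\Phi(X) = \diag(b)(\idm - AA^\dag)\,AXA^*\,\diag(b)^{-1} = 0$ because $(\idm - AA^\dag)A = 0$, in particular $\Tr(M\Phi(X))=0$. Injectivity is then a one-line computation: $\Phi(X_1) = \Phi(X_2)$ yields $AX_1 A^* = AX_2 A^*$, and multiplying on the left by $A^\dag$ and on the right by $(A^\dag)^*$ together with $A^\dag A = \idm_p$ recovers $X_1 = X_2$.

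The main obstacle is surjectivity, i.e.\ inverting $\Phi$. Given $U$ feasible for \ref{eq:ph-SDP2}, the key step is to exploit the constraint $\Tr(MU)=0$ via the standard fact that for $M,U\succeq 0$ one has $\Tr(MU) = \|M^{1/2} U^{1/2}\|_F^2$, so $\Tr(MU) = 0$ forces $MU = 0$. This collapses to $(\idm-AA^\dag)\diag(b)U = 0$; taking adjoints (using that $\diag(b)$ and $AA^\dag$ are Hermitian) also gives $U\diag(b)(\idm-AA^\dag) = 0$. Consequently $V := \diag(b) U \diag(b) \succeq 0$ satisfies $V = AA^\dag V AA^\dag$, and plugging in $AA^\dag = A(A^*A)^{-1}A^*$ lets me factor $V = A X A^*$ with the explicit candidate $X := A^\dag V (A^\dag)^* \in \herm_p^+$.

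It then remains only to verify that this $X$ has the right properties: $\Phi(X) = \diag(b)^{-1} V \diag(b)^{-1} = U$ by construction, and $\Tr(a_i a_i^* X) = (AXA^*)_{ii} = V_{ii} = b_i^2 U_{ii} = b_i^2$ using $\diag(U)=1$. The entire argument hinges on two algebraic facts about the pseudoinverse, $AA^\dag A = A$ and $A^\dag A = \idm_p$; the one nontrivial step is the extraction of $V = A X A^*$ from $\Tr(MU)=0$, which is where the structure of the problem (and not just linear algebra) enters.
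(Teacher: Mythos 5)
Your proof is correct and follows essentially the same route as the paper's: the key step in both is that $\Tr(MU)=0$ with $M,U\succeq 0$ forces $MU=0$, and your candidate inverse $X=A^\dag V (A^\dag)^*$ with $V=\diag(b)U\diag(b)$ is exactly the paper's $X=A^\dag\diag(b)U\diag(b)A^{\dag *}$. The only differences are presentational—you justify $MU=0$ explicitly via $\Tr(MU)=\|M^{1/2}U^{1/2}\|_F^2$ and spell out injectivity through $A^\dag A=\idm_p$, where the paper states these in passing.
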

\begin{proof}
Note that $\Phi$ is injective whenever $b>0$ and $A$ has full rank. We have to show that $U$ is a feasible point of~\ref{eq:ph-SDP2} if and only if it can be written under the form $\Phi(X)$, where $X$ is feasible for~\ref{eq:ph-lift}.
We first show that
\BEQ\label{eq:U-cond}
\Tr(MU)=0, \quad U\succeq 0,
\EEQ
is equivalent to 
\BEQ\label{eq:Phi-cond}
U=\Phi(X)
\EEQ
for some $X\succeq 0$. Observe that $\Tr(UM)=0$ means $UM=0$ because $U,M\succeq 0$, hence 
$\Tr(MU)=0$ in~\eqref{eq:U-cond} is equivalent to
\[
AA^\dag \diag(b) U \diag(b)=\diag(b)U\diag(b)
\]
because $b>0$ and $M=\diag(b)(\idm-AA^\dag)\diag(b)$. If we set $X=A^\dag \diag(b) U \diag(b)A^{\dag *}$, this last equality implies both 
\[
AX=AA^\dag \diag(b) U \diag(b)A^{\dag *}=\diag(b) U \diag(b)A^{\dag *}
\] 
and
\[
AXA^*= \diag(b) U \diag(b)A^{\dag *}A^*=\diag(b) U \diag(b)
\]
which is $U=\Phi(X)$, and shows~\eqref{eq:U-cond} implies~\eqref{eq:Phi-cond}. Conversely, if $U=\Phi(X)$ then $\diag(b)U\diag(b)=AXA^*$ and using $AA^\dag A=A$, we get $AXA^*=AA^\dag AXA^*=AA^\dag \diag(b)U\diag(b)$ which means $MU=0$, hence \eqref{eq:U-cond} is in fact equivalent to \eqref{eq:Phi-cond} since $U\succeq 0$ by construction.

Now, if $X$ is feasible for~\ref{eq:ph-lift}, we have shown $\Tr(M\Phi(X))=0$ and $\phi(X)\succeq 0$, moreover $\diag(\Phi(X))_i=\Tr(a_ia_i^*X)/b_i^2=1$, so $U=\Phi(X)$ is a feasible point of~\ref{eq:ph-SDP2}. Conversely, if 
$U$ is feasible for~\ref{eq:ph-SDP2}, we have shown that there exists $X\succeq 0$ such that $U=\Phi(X)$ which means $\diag(b)U\diag(b)=AXA^*$. We also have $\Tr(a_ia_i^*X)=b_i^2U_{ii}=b_i^2$, which means $X$ is feasible for~\ref{eq:ph-lift} and concludes the proof.
\end{proof}

We now have the following central corollary showing the equivalence between~\ref{eq:ph-SDP2} and~\ref{eq:ph-lift} in the noiseless case.

\begin{corollary}\label{cor:equivalence}
If $A$ is injective, $b_i\ne 0$ for all $i=1,...,n$ and if the reconstruction problem~\eqref{eq:ph-recov} admits an exact solution, then~\ref{eq:ph-SDP2} is tight (i.e. has a unique rank one solution) whenever~\ref{eq:ph-lift} is.
\end{corollary}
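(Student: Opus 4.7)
The plan is to leverage Proposition \ref{prop:Phi} directly: since $\Phi$ is already known to be a bijection between the feasible sets of \ref{eq:ph-SDP2} and \ref{eq:ph-lift}, it suffices to check two additional properties of $\Phi$, namely that it preserves the objective values (so that it restricts to a bijection between the optimal sets) and that it preserves rank (so that rank-one optimal points on one side correspond to rank-one optimal points on the other).

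First I would verify the objective identification. For a feasible $X \succeq 0$ of \ref{eq:ph-lift} and $U = \Phi(X) = \diag(b)^{-1} A X A^* \diag(b)^{-1}$, compute
\[
\Tr(BU) = \Tr\bigl(\diag(b) A^{\dag *} A^\dag \diag(b) \cdot \diag(b)^{-1} A X A^* \diag(b)^{-1}\bigr) = \Tr(A^{\dag *} A^\dag A X A^*).
\]
Since $A$ is injective, $A^\dag A = \idm_p$, so the right-hand side collapses to $\Tr(A^{\dag *} X A^*) = \Tr(A^* A^{\dag *} X) = \Tr(X)$. Thus $\Phi$ sends the \ref{eq:ph-lift} objective to the \ref{eq:ph-SDP2} objective, and the bijection from Proposition \ref{prop:Phi} restricts to a bijection between optimal solutions.

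Next I would check that $\Phi$ preserves rank. Because $A$ has full column rank and $\diag(b)$ is invertible, the linear map $X \mapsto \diag(b)^{-1} A X A^* \diag(b)^{-1}$ takes a PSD matrix of rank $r$ to a PSD matrix of rank $r$; in particular $\Phi(xx^*) = vv^*$ with $v = \diag(b)^{-1} A x$.

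Finally I would assemble the conclusion. Suppose \ref{eq:ph-lift} is tight, i.e.\ it admits a unique optimal point $X^\star = xx^*$ of rank one. By the bijection between optimal sets, $\Phi(X^\star)$ is the unique optimal point of \ref{eq:ph-SDP2}, and by rank preservation it has rank one, so \ref{eq:ph-SDP2} is tight as well. No step looks like a serious obstacle here; the only subtlety is being careful that the objective-preservation identity really uses $A^\dag A = \idm_p$, which is why the injectivity hypothesis on $A$ is essential (as is $b_i \neq 0$, needed to make $\diag(b)^{-1}$ well defined and to invoke Proposition \ref{prop:Phi}).
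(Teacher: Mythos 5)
Your proof is correct and follows essentially the same route as the paper, which proves the corollary in one line by noting that when $A$ is injective, $\Tr(X)=\Tr(B\Phi(X))$ and $\Rank(X)=\Rank(\Phi(X))$, then invokes the bijection of Proposition~\ref{prop:Phi}. You have simply spelled out the trace computation (via $A^\dag A=\idm_p$) and the rank-preservation argument that the paper leaves implicit.
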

\begin{proof} 
When $A$ is injective, $\Tr(X)=\Tr(B\Phi(X))$ and $\Rank(X)=\Rank(\Phi(X))$.
\end{proof}

This last result shows that in the noiseless case, the relaxations~\ref{eq:ph-lift} and~\ref{eq:ph-SDP2} are in fact equivalent. In the same way, we could have shown that~\ref{eq:weak-ph-lift} and~\ref{eq:ph-SDP} were equivalent. The performances of both algorithms may not match however when the information on~$b$ is noisy and perfect recovery is not possible.

\begin{remark} Note that Proposition \ref{prop:Phi} and corollary \ref{cor:equivalence} also hold when the initial signal is real and the measurements are complex. In this case, we define the $B$ in~\ref{eq:ph-SDP2} by $B=B_2A_2^{\dag *}A_2^\dag B_2$ (with the notations of paragraph~\ref{sss:real_valued_signal}). We must also replace the definition of $\Phi$ by $\Phi(X)=B_2^{-1}A_2XA_2^*B_2^{-1}$. Furthermore, all steps in the proof of proposition \ref{prop:Phi} are still valid if we replace $M$ by $M_2$, $A$ by $A_2$ and $\diag(b)$ by $B_2$. The only difference is that now $\frac{1}{b_i^2}\Tr(a_ia_i^*X)=\diag(\Phi(X))_{i}+\diag(\Phi(X))_{n+i}$.
\end{remark}

\subsection{Stability in the Presence of Noise.\label{ss:noisy_measurements}}

We now consider the case where the vector of measurements $b$ is of the form $b=|Ax_0|+b_{\rm noise}$. We first introduce a definition of $C$-stability for~\ref{eq:ph-SDP} and~\ref{eq:weak-ph-lift}. The main result of this section is that, when the~\ref{eq:weak-ph-lift} solution in~\eqref{eq:ph-lift-geom} is stable at a point, \ref{eq:ph-SDP} is stable too, with a constant of the same order. The converse does not seem to be true when $b$ is sparse.

\begin{definition}\label{def:stability}
Let $x_0\in\C^n,C>0$. The algorithm \ref{eq:ph-SDP} (resp.~\ref{eq:weak-ph-lift}) is said to be $C$-stable at $x_0$ iff for all $b_{\rm noise}\in\R^n$ close enough to zero, every minimizer $V$ of equation~\eqref{eq:ph-cut-geom} (resp. \eqref{eq:ph-lift-geom}) with $b=|Ax_0|+b_{\rm noise}$, satisfies
\begin{equation*}
\|V-(Ax_0)(Ax_0)^*\|_2\leq C\|Ax_0\|_2\|b_{\rm noise}\|_2.
\end{equation*}
\end{definition}

The following matrix perturbation result motivates this definition, by showing that a $C$-stable algorithm generates a $O(C\|b_{\rm noise}\|_2)$-error over the signal it reconstructs.
\begin{proposition}
Let $C>0$ be arbitrary. We suppose that $Ax_0\ne 0$ and $\|V-(Ax_0)(Ax_0)^*\|_2\leq C\|Ax_0\|_2\|b_{\rm noise}\|_2\leq\|Ax_0\|_2^2/2$. Let $y$ be $V$'s main eigenvector, normalized so that $(Ax_0)^*y=\|Ax_0\|_2$. Then
\begin{equation*}
\|y-Ax_0\|_2=O(C\|b_{\rm noise}\|_2),
\end{equation*}
and the constant in this last equation does not depend upon $A$, $x_0$, $C$ or $\|b\|_2$.
\end{proposition}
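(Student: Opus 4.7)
The plan is to apply a standard rank-one perturbation argument. Set $z=Ax_0$ and $E=V-zz^*$, so the hypothesis reads
\[
\|E\|_2 \le C\|z\|_2\,\|b_{\rm noise}\|_2 \le \tfrac12\|z\|_2^2.
\]
Since $zz^*$ has top eigenvalue $\|z\|_2^2$ with all other eigenvalues equal to zero, Weyl's inequality applied to $V=zz^*+E$ yields $\lambda_1(V)\in[\tfrac12\|z\|_2^2,\tfrac32\|z\|_2^2]$ and every other eigenvalue of $V$ in $[-\tfrac12\|z\|_2^2,\tfrac12\|z\|_2^2]$. The top eigenvalue is therefore simple and separated from the rest by a gap of at least $\tfrac12\|z\|_2^2$, which is precisely the regime where Davis-Kahan type bounds apply.

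Next I would decompose a unit-norm top eigenvector $\tilde y$ of $V$ (with its unimodular phase chosen so that $z^*\tilde y\ge 0$) as
\[
\tilde y = \cos(\theta)\,\frac{z}{\|z\|_2} + \sin(\theta)\,v,\qquad v\perp z,\ \|v\|_2=1,
\]
and project the identity $V\tilde y=\lambda_1(V)\tilde y$ onto $v$. The rank-one term vanishes because $v^*z=0$, giving
\[
\lambda_1(V)\sin\theta = v^*E\,\tilde y,
\]
so $|\sin\theta| \le \|E\|_2/\lambda_1(V) \le 2\|E\|_2/\|z\|_2^2 \le 2C\|b_{\rm noise}\|_2/\|z\|_2$. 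This is the core estimate; everything else is bookkeeping.

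The final step is to convert this angle bound into the stated inequality using the normalization $(Ax_0)^*y=\|Ax_0\|_2$ (equivalently, the real positive rescaling of $\tilde y$ obtained by dividing through by $\cos\theta$). This rescaling gives the one-line identity
\[
y = \frac{z}{\|z\|_2} + v\tan\theta,
\]
and then the triangle inequality combined with the uniform bound $\cos\theta\ge \sqrt{1-\sin^2\theta}\ge 1/\sqrt 2$ (which follows from $\sin\theta\le 1$) converts $|\sin\theta|=O(C\|b_{\rm noise}\|_2/\|z\|_2)$ into the claimed $O(C\|b_{\rm noise}\|_2)$-bound for $\|y-Ax_0\|_2$ after the consistent rescaling of $y$ to match the linear normalization in the statement. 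Crucially, the numerical constants involved are just the factor $2$ from Weyl and the factor $\sqrt 2$ from the gap, so the implicit constant is independent of $A$, $x_0$, $C$, and $\|b\|_2$, as required.

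The main obstacle is not the perturbation estimate itself, which is essentially one application of Davis-Kahan, but getting the normalization of the eigenvector exactly right: one must first fix the unimodular phase of the unit eigenvector so that $z^*\tilde y$ is real and nonnegative, then rescale to satisfy the prescribed linear condition, and finally verify that the various powers of $\|Ax_0\|_2$ cancel in such a way that the final bound sees only $C\|b_{\rm noise}\|_2$. Once these conventions are pinned down, the proof is essentially the three displayed lines above.
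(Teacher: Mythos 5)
Your core estimate is correct, and your route is genuinely different from the paper's: the paper does not argue from scratch but invokes the first-order eigenvector perturbation expansion of \cite[Eq.10]{El-K09} (itself based on Kato's resolvent series), applied to $u=Ax_0/\|Ax_0\|_2$, $v=y/\|Ax_0\|_2$ and $E=(V-(Ax_0)(Ax_0)^*)/\|Ax_0\|_2^2$, using that the reduced resolvent of $uu^*$ has norm one. Your Weyl-plus-projection argument replaces that citation with a self-contained computation, and the identity $\lambda_1(V)\sin\theta=v^*E\tilde y$ (valid because $v^*z=0$ kills the rank-one term), combined with $\lambda_1(V)\geq\|z\|_2^2/2$, correctly yields $\sin\theta\leq 2C\|b_{\rm noise}\|_2/\|z\|_2$. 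One incidental misstatement: Weyl gives $\lambda_1(V)-\lambda_2(V)\geq\|z\|_2^2-2\|E\|\geq 0$, not a gap of $\tfrac12\|z\|_2^2$; your two eigenvalue intervals overlap at $\tfrac12\|z\|_2^2$, so at the boundary of the hypothesis the top eigenvalue may be degenerate. This is harmless only because your projection identity never uses the gap.

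Two steps in your bookkeeping, however, do not hold as written. First, ``$\cos\theta\geq\sqrt{1-\sin^2\theta}\geq 1/\sqrt2$, which follows from $\sin\theta\leq 1$'' is a non sequitur: $\sin\theta\leq 1$ only gives $\cos\theta\geq 0$, and you need $\cos\theta$ bounded below to control $\tan\theta$ and the rescaling by $1/\cos\theta$. Worse, the inequalities you actually use retain only $\|E\|_\infty\leq\tfrac12\|z\|_2^2$, and under that information alone $\cos\theta$ can vanish at the boundary: take $E=\tfrac12\|z\|_2^2(vv^*-uu^*)$, for which $V=\tfrac12\|z\|_2^2(uu^*+vv^*)$ has a degenerate top eigenvalue with $v\perp z$ a main eigenvector. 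The Frobenius hypothesis (recall $\|\cdot\|_2$ is the Schatten $2$-norm here) does exclude this, since that $E$ has $\|E\|_2=\|z\|_2^2/\sqrt2>\tfrac12\|z\|_2^2$, but turning this slack into a uniform lower bound on $\cos\theta$ requires jointly bounding $u^*Eu$, $v^*Eu$, $v^*Ev$ against the Frobenius budget --- a short but real computation that is absent (alternatively, one can retreat to the regime $\|b_{\rm noise}\|_2\to 0$ of Definition~\ref{def:stability}, where $\sin\theta\to 0$ and any fixed threshold suffices). Second, your displayed rescaling $y=z/\|z\|_2+v\tan\theta$ does satisfy the literal normalization $(Ax_0)^*y=\|Ax_0\|_2$, but then $\|y-Ax_0\|_2^2=(1-\|Ax_0\|_2)^2+\tan^2\theta$, which is not $O(C\|b_{\rm noise}\|_2)$ unless $\|Ax_0\|_2=1$. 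The scaling the paper's own proof uses (via $\|y-Ax_0\|_2=\|Ax_0\|_2\|u-v\|$) corresponds to $(Ax_0)^*y=\|Ax_0\|_2^2$ --- the exponent in the statement appears to be a typo --- i.e.\ $y=Ax_0+\|Ax_0\|_2\tan\theta\,v$, whence your estimate gives $\|y-Ax_0\|_2\leq 2\sqrt2\,C\|b_{\rm noise}\|_2$ once the $\cos\theta$ bound is secured. So the approach is sound and arguably more transparent than the paper's citation-based proof, but the final two inequalities must be repaired before the argument closes.
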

\begin{proof}
We use \cite[Eq.10]{El-K09} for
\begin{equation*}
u=\frac{Ax_0}{\|Ax_0\|_2}\hskip 0.8cm
v=\frac{y}{\|Ax_0\|_2}\hskip 0.8cm
E = \frac{V-(Ax_0)(Ax_0)^*}{\|Ax_0\|_2^2}
\end{equation*}
This result is based on \cite[Eq.\,3.29]{Kato95}, which gives a precise asymptotic expansion of $u-v$. For our purposes here, we only need the first-order term. See also \cite{Bhat97}, \cite{Stew90} or \cite{Stew01} among others for a complete discussion. We get $\|v-u\|=O(\|E\|_2)$ because if $M=uu^*$, then $\|R\|_\infty=1$ in \cite[Eq.10]{El-K09}. This implies
\begin{equation*}
\|y-Ax_0\|_2=\|Ax_0\|_2\|u-v\|=O\left(\frac{\|V-(Ax_0)(Ax_0)^*\|_2}{\|Ax_0\|_2}\right)=O(C\|b_{\rm noise}\|)
\end{equation*}
which is the desired result.
\end{proof}

Note that normalizing $y$ differently, we would obtain $\|y-Ax_0\|_2\leq 4C\|b_{\rm noise}\|_2$. We now show the main result of this section, according to which \ref{eq:ph-SDP} is ``almost as stable as'' ~\ref{eq:weak-ph-lift}. In practice of course, the exact values of the stability constants has no importance, what matters is that they are of the same order.

\begin{theorem}\label{thm:stability}
Let $A\in\C^{n\times m}$, for all $x_0\in\C^n,C>0$, if~\ref{eq:weak-ph-lift} is $C$-stable in $x_0$, then \ref{eq:ph-SDP} is $(2C+2\sqrt{2}+1)$-stable in $x_0$.
\end{theorem}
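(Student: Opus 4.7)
The plan is to introduce the auxiliary matrix $\tilde V := AA^\dag V_{PC} AA^\dag$, which lies in $\herm_n^+\cap\mathcal{F}$, and to apply the triangle inequality
\[
\|V_{PC}-V_0\|_2 \;\leq\; \|V_{PC}-\tilde V\|_2 + \|\tilde V - V_0\|_2,
\]
with $V_0:=(Ax_0)(Ax_0)^*$. I will control the first summand by combining the PhaseCut optimal value with the positive semidefinite block structure of $V_{PC}$, which will produce the constant $2\sqrt{2}+1$. I will control the second summand by applying the $C$-stability of \ref{eq:weak-ph-lift} at $x_0$ for a \emph{modified} noise level chosen so that $\tilde V$ is itself a minimizer of \eqref{eq:ph-lift-geom}; this produces the constant $2C$. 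Adding the two estimates yields the announced $2C+2\sqrt{2}+1$.

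To bound $\|V_{PC}-\tilde V\|_2$, I would work in the orthonormal basis $T=(T_1,T_2)$ adapted to $\range(A)$ used in the proof of Lemma~\ref{lem:dist_to_F}. Writing $T^*V_{PC}T=\left(\begin{smallmatrix}V_1 & V_2\\ V_2^* & V_3\end{smallmatrix}\right)$, one has $T^*\tilde V T=\left(\begin{smallmatrix}V_1 & 0\\ 0 & 0\end{smallmatrix}\right)$, so $\|V_{PC}-\tilde V\|_2^2=2\|V_2\|_2^2+\|V_3\|_2^2$. The positive semidefiniteness of $T^*V_{PC}T$ gives the block estimates $\|V_3\|_2\leq\Tr(V_3)$ and $\|V_2\|_2^2\leq\Tr(V_1)\Tr(V_3)\leq\|b\|_2^2\,\Tr(V_3)$, reducing the problem to bounding the PhaseCut optimal value $\epsilon_{PC}=\Tr(V_3)=d_1(V_{PC},\mathcal{F})$. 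I would obtain this from the PhaseCut-feasible competitor $V^*:=D V_0 D$ with $D:=\diag(b/|Ax_0|)$: by construction $\diag(V^*)=b^2$ and $V^*\succeq 0$, and since $(I-AA^\dag)Ax_0=0$, Lemma~\ref{lem:dist_to_F} gives
\[
d_1(V^*,\mathcal{F})=\|(I-AA^\dag)(D-I)Ax_0\|_2^2\leq\|(D-I)Ax_0\|_2^2=\|b_{\rm noise}\|_2^2.
\]
Assembling these estimates yields $\|V_{PC}-\tilde V\|_2\leq\sqrt{2}\,\|b\|_2\|b_{\rm noise}\|_2+\|b_{\rm noise}\|_2^2$, which after using $\|b\|_2\leq\|Ax_0\|_2+\|b_{\rm noise}\|_2$ and restricting to noise small enough that $\|b_{\rm noise}\|_2\leq\|Ax_0\|_2$ becomes $(2\sqrt{2}+1)\|Ax_0\|_2\|b_{\rm noise}\|_2$.

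For $\|\tilde V - V_0\|_2$, the key observation is that $\tilde V \in \herm_n^+\cap\mathcal{F}$ and, setting $\tilde b:=\sqrt{\diag(\tilde V)}$, also $\tilde V\in\mathcal{H}_{\tilde b}$, so $\tilde V$ is an exact minimizer of \eqref{eq:ph-lift-geom} at the signal $\tilde b$, attaining objective value zero. Applying $C$-stability of \ref{eq:weak-ph-lift} at $x_0$ for the noise $\tilde b-|Ax_0|$ (which is small whenever $b_{\rm noise}$ is, by continuity) yields $\|\tilde V-V_0\|_2\leq C\|Ax_0\|_2\|\tilde b-|Ax_0|\|_2$. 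I then split $\|\tilde b-|Ax_0|\|_2\leq\|\tilde b-b\|_2+\|b_{\rm noise}\|_2$ and pass from the matrix bound $\|\tilde V-V_{PC}\|_2$ of the previous paragraph to the vector bound $\|\tilde b-b\|_2$ via $|\tilde b_i^2-b_i^2|=(\tilde b_i+b_i)|\tilde b_i-b_i|$, together with the standing hypothesis $b_i\neq 0$ of Section~\ref{s:completion}. For $b_{\rm noise}$ small enough this produces $\|\tilde b-b\|_2\leq\|b_{\rm noise}\|_2$, hence $\|\tilde V-V_0\|_2\leq 2C\|Ax_0\|_2\|b_{\rm noise}\|_2$, and summing with the previous estimate gives the bound required by Definition~\ref{def:stability}.

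The main obstacle I anticipate is precisely the passage from the matrix Frobenius bound $\|\tilde V-V_{PC}\|_2$ to the vector $\ell^2$ bound $\|\tilde b-b\|_2$ on square roots of diagonals: the identity $\sqrt{x}-\sqrt{y}=(x-y)/(\sqrt{x}+\sqrt{y})$ forces a division by $\tilde b_i+b_i$, and a clean linear-in-noise estimate truly relies on the measurements being bounded away from zero (so that this denominator does not collapse). A secondary, purely combinatorial, difficulty is to keep track of the numerous constants so that everything collapses to exactly $2C+2\sqrt{2}+1$ rather than something slightly worse; I expect this is handled by using crude-but-clean estimates at each step and absorbing quadratic noise terms (such as $\|b_{\rm noise}\|_2^2$) into the dominant linear contribution using $\|b_{\rm noise}\|_2\leq\|Ax_0\|_2$.
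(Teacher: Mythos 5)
Your overall architecture is the paper's own proof read forwards instead of by contradiction: your $\tilde V=AA^\dag V_{PC}AA^\dag$ is the paper's $V_{PC}^\sslash$, your competitor $DV_0D$ is the paper's $(b\times u)(b\times u)^*$ (note that $D=\diag(b/|Ax_0|)$ is ill-defined when $(Ax_0)_i=0$; the paper avoids the division by choosing any unimodular phase vector $u$ with $(Ax_0)_i=u_i|Ax_0|_i$), and your block estimates $\|V_3\|_2\le\Tr V_3$ and $\|V_2\|_2^2\le\Tr V_1\,\Tr V_3$, combined with $\Tr V_3=d_1(V_{PC},\mathcal{F})\le\|b_{\rm noise}\|_2^2$ and the restriction $\|b_{\rm noise}\|_2\le\|Ax_0\|_2$, reproduce exactly the paper's Lemma~\ref{lem:ecart_V_PC}, constant $2\sqrt{2}+1$ included. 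The observation that $\tilde V$ exactly minimizes \eqref{eq:ph-lift-geom} for the modified data $\tilde b$ (objective zero, since $\tilde V\in\herm_n^+\cap\mathcal{F}\cap\mathcal{H}_{\tilde b}$) is also the paper's key step. Up to this point the proposal is sound.

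The gap is precisely where you flagged it, and it is fatal as stated: the passage from $\|\tilde V-V_{PC}\|_2$ to $\|\tilde b-b\|_2$ via $|\tilde b_i-b_i|=|\tilde b_i^2-b_i^2|/(\tilde b_i+b_i)$ cannot produce a noise-linear bound. The standing hypothesis $b_i\ne 0$ gives no uniform lower bound on the denominator: since $b=|Ax_0|+b_{\rm noise}$, wherever $|Ax_0|_i$ vanishes (or is tiny) one has $\tilde b_i+b_i=O(\|b_{\rm noise}\|_2)$, so the quotient is of order $\|Ax_0\|_2$ and does \emph{not} shrink with the noise; the claim ``for $b_{\rm noise}$ small enough this produces $\|\tilde b-b\|_2\le\|b_{\rm noise}\|_2$'' is therefore false in general, and any constant extracted this way would depend on $\min_i|Ax_0|_i$, which Definition~\ref{def:stability} does not allow. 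Nor does the division-free inequality $|\sqrt{x}-\sqrt{y}|\le\sqrt{|x-y|}$ rescue the step: it only yields $\|\tilde b-b\|_2=O\bigl(\sqrt{\|Ax_0\|_2\|b_{\rm noise}\|_2}\bigr)$, a square-root rate. The paper's Lemma~\ref{lem:ecart_b} closes the gap with no division at all, by exploiting the positive semidefiniteness of the full matrix $V_{PC}$ rather than just the Frobenius distance of the diagonals: writing $e_i=f_i+g_i$ with $f_i\in\Range(A)$, $g_i\in\Range(A)^\perp$, one has $V_{PC\,ii}=V^\sslash_{PC\,ii}+2\Re(f_i^*V_{PC}g_i)+V^\perp_{PC\,ii}$ and, by Cauchy--Schwarz for PSD forms, $|f_i^*V_{PC}g_i|\le\sqrt{V^\sslash_{PC\,ii}}\sqrt{V^\perp_{PC\,ii}}$, which gives the pointwise bound $|\tilde b_i-b_i|=|\sqrt{V^\sslash_{PC\,ii}}-\sqrt{V_{PC\,ii}}|\le\sqrt{V^\perp_{PC\,ii}}$; summing in $\ell^2$ then yields $\|\tilde b-b\|_2\le\sqrt{\Tr V_{PC}^\perp}\le\|b_{\rm noise}\|_2$, reusing the trace estimate you already established for the first summand, hence $\|\tilde b-|Ax_0|\|_2\le 2\|b_{\rm noise}\|_2$ and the factor $2C$. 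With this substitution your direct argument becomes a correct rewriting of the paper's proof.
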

\begin{proof}
Let $x_0\in\C^n,C>0$ be such that~\ref{eq:weak-ph-lift} is $C$-stable in $x_0$. $Ax_0$ is a non-zero vector (because, with our definition, neither~\ref{eq:weak-ph-lift} nor \ref{eq:ph-SDP} may be stable in $x_0$ if $Ax_0=0$ and $A\ne 0$). We set $D=2C+2\sqrt{2}+1$ and suppose by contradiction that \ref{eq:ph-SDP} is not $D$-stable in $x_0$. Let $\epsilon>0$ be arbitrary. Let $b_{\rm n,PC}\in\R^n$ be such that $\|b_{\rm n,PC}\|_2\leq \max(\|Ax_0\|_2,\epsilon/2)$ and such that, for $b=|Ax_0|+b_{\rm n,PC}$, the minimizer $V_{PC}$ of \eqref{eq:ph-cut-geom} verifies
\begin{equation*}
\|V_{PC}-(Ax_0)(Ax_0)^*\|_2>D\|Ax_0\|_2\|b_{\rm n,PC}\|_2
\end{equation*}
Such a $V_{PC}$ must exist or \ref{eq:ph-SDP} would be $D$-stable in $x_0$. We call $V_{PC}^\sslash$ the restriction of $V_{PC}$ to $\Range(A)$ (that is, the matrix such that $x^*(V_{PC}^\sslash) y=x^*(V_{PC})y$ if $x,y\in\Range(A)$ and $x^*(V_{PC}^\sslash) y=0$ if $x\in\Range(A)^\perp$ or $y\in\Range(A)^\perp$) and $V_{PC}^\perp$ the restriction of $V_{PC}$ to $\Range(A)^\perp$. Let us set $b_{\rm n,PL}=\sqrt{V_{PC\,ii}^\sslash}-|Ax_0|_{ii}$ for $i=1,...,n$. As $V_{PC}^\sslash\in\herm_n^+\cap\mathcal{F}$, $V_{PC}^\sslash$ minimizes \eqref{eq:ph-lift-geom} for $b=|Ax_0|+b_{\rm n,PL}$ (because $V_{PC}^\sslash\in\mathcal{H}_b$). Lemmas \ref{lem:ecart_V_PC} and \ref{lem:ecart_b} (proven in the appendix) imply that $\|V_{PC}^\sslash-(Ax_0)(Ax_0)^*\|_2>C\|Ax_0\|_2\|b_{\rm n,PL}\|_2$ and $\|b_{\rm n,PL}\|_2\leq\epsilon$. As $\epsilon$ is arbitrary, ~\ref{eq:weak-ph-lift} is not $C$-stable in $x_0$, which contradicts our hypotheses. Consequently, \ref{eq:ph-SDP} is $(2C+2\sqrt{2}+1)$-stable in $x_0$.
\end{proof}

Theorem~\ref{thm:stability} is still true if we replace $2C+2\sqrt{2}+1$ by any $D>2C+\sqrt{2}$. We only have to replace, in the demonstration, the inequality $\|b_{\rm n,PC}\|_2\leq\|Ax_0\|_2$ by $\|b_{\rm n,PC}\|_2\leq\alpha\|Ax_0\|_2$ with $\alpha={D-(2C+\sqrt{2})}/{(1+\sqrt{2})}$. Also, the demonstration of this theorem is based on the fact that, when $V_{PC}$ solves \eqref{eq:ph-cut-geom}, one can construct some $V_{PL}=V_{PC}^\sslash$ close to $V_{PC}$, which is an approximate solution of \eqref{eq:ph-lift-geom}. It is natural to wonder whether, conversely, from a solution $V_{PL}$ of \eqref{eq:ph-lift-geom}, one can construct an approximate solution $V_{PC}$ of \eqref{eq:ph-cut-geom}. It does not seem to be the case. One could for example imagine setting $V_{PC}=\diag(R)V_{PL}\diag(R)$, where $R_i={b_i}/{\sqrt{V_{PL\,ii}}}$. Then $V_{PC}$ would not necessarily minimize \eqref{eq:ph-cut-geom} but at least belong to $\mathcal{H}_b$. But $\|V_{PC}-V_{PL}\|_2$ might be quite large: \eqref{eq:ph-lift-geom} implies that $\|\diag(V_{PL})-b^2\|_s$ is small but, if some coefficients of $b$ are very small, some $R_i$ may still be huge, so $\diag(R)\not\approx\idm$. This does happen in practice (see \S~\ref{ss:with_noise}).

To conclude this section, we relate this definition of stability to the one introduced in \citep{Cand12}. Suppose that $A$ is a matrix of random gaussian independant measurements such that $\Expect[|A_{i,j}|^2]=1$ for all $i,j$. We also suppose that $n\geq c_0 p$ (for some $c_0$ independent of $n$ and $p$). In the noisy setting, \citet{Cand12} showed that the minimizer $X$ of a modified version of~\ref{eq:ph-lift} satisfies with high probability
\begin{equation}\label{eq:candes_stability}
||X-x_0x_0^*||_2\leq C_0\frac{||\,|Ax_0|^2-b^2\,||_1}{n}
\end{equation}
for some $C_0$ independent of all variables. Assuming that the \ref{eq:weak-ph-lift} solution in~\eqref{eq:ph-lift-geom} behaves as~\ref{eq:ph-lift} in a noisy setting and that \eqref{eq:candes_stability} also holds for~\ref{eq:weak-ph-lift}, then
\begin{align*}
||AXA^*-(Ax_0)(Ax_0)^*||_2
&\leq ||A||_\infty^2||X-x_0x_0^*||_2\\
&\leq C_0\frac{||A||_\infty^2}{n}||\,|Ax_0|^2-b^2\,||_1\\
&\leq C_0\frac{||A||_\infty^2}{n}(2||Ax_0||_2+||b_{\rm noise}||_2)||b_{\rm noise}||_2
\end{align*}
Consequently, for any $C>2C_0\frac{||A||_\infty^2}{n}$, \ref{eq:weak-ph-lift} is $C$-stable in all $x_0$. With high probability, $||A||_\infty^2\leq (1+1/8)n$ (it is a corollary of \citep[Lemma 2.1]{Cand12}) so \ref{eq:weak-ph-lift} (and thus  also \ref{eq:ph-SDP}) is $C$-stable with high probability for some $C$ independent of all parameters of the problem.

\subsection{Perturbation Results}\label{ss:pert}
We recall here sensitivity analysis results for semidefinite programming from \cite{Todd99,Yild03}, which produce explicit bounds on the impact of small perturbations in the observation vector $b^2$ on the solution $V$ of the semidefinite program~\eqref{eq:scaled-pb}. Roughly speaking, these results show that if $b^2+b_{noise}$ remains in an explicit ellipsoid (called Dikin's ellipsoid), then interior point methods converge back to the solution in one full Newton step, hence the impact on $V$ is linear, equal to the Newton step. These results are more numerical in nature than the stability bounds detailed in the previous section, but they precisely quantify both the size and, perhaps more importantly, the geometry of the stability region.

\subsection{Complexity Comparisons}\label{ss:complexity}
Both the relaxation in~\ref{eq:ph-lift} and that in~\ref{eq:ph-SDP} are semidefinite programs and we highlight below the relative complexity of solving these problems depending on algorithmic choices and precision targets. Note that, in their numerical experiments, \citep{Cand11} solve a penalized formulation of problem~\ref{eq:ph-lift}, written
\BEQ\label{eq:pen-lift}
\min_{X\succeq 0}~\sum_{i=1}^n ( \Tr(a_ia_i^*X)-b_i^2)^2 + \lambda \Tr(X)
\EEQ
in the variable $X\in\herm_p$, for various values of the penalty parameter $\lambda>0$. 

The trace norm promotes a low rank solution, and solving a sequence of weighted trace-norm problems has been shown to further reduce the rank in \citep{Faze03,Cand11}. This method replaces $\Tr(X)$ by $\Tr (W_k X)$ where $W_0$ is initialized to the identity $I$. Given a solution $X_k$ of the resulting semidefinite program, the weighted matrix is updated to $W_{k+1} = (X_k + \eta I)^{-1}$ (see \citet{Faze03} for details). We denote by~$K$ the total number of such iterations, typically of the order of $10$. Trace minimization is not needed for the semidefinite program (\ref{eq:ph-SDP}), where the trace is fixed because we optimize over a normalized phase vector. However, weighted trace-norm iterations could potentially improve performance in~\ref{eq:ph-SDP} as well.

Recall that $p$ is the size of the signal and $n$ is the number of measured samples with $n = J p$ in the examples reviewed in Section \ref{s:numres}. In the numerical experiments in \citep{Cand11} as well as in this paper, $J = 3,4,5$. The complexity of solving the~\ref{eq:ph-SDP} and~\ref{eq:ph-lift} relaxations in~\ref{eq:ph-lift} using generic semidefinite programming solvers such as SDPT3 \citep{Toh96}, {\em without exploiting structure}, is given by
\[
O\left(J^{4.5}\,p^{4.5}\log\frac{1}{\epsilon}\right)
\quad \mbox{and} \quad
O\left(K\,J^{2}\, p^{4.5} \log\frac{1}{\epsilon}\right)
\]
for \ref{eq:ph-SDP} and \ref{eq:ph-lift} respectively \citep[\S\,6.6.3]{Bent01}. The fact that the constraint matrices have only one nonzero coefficient in~\ref{eq:ph-SDP} can be exploited (the fact that the constraints $a_ia_i^*$ are rank one in~\ref{eq:ph-lift} helps, but it does not modify the principal complexity term) so we get
\[
O\left(J^{3.5}\,p^{3.5} \log\frac{1}{\epsilon}\right)
\quad \mbox{and} \quad
O\left(K\,J^{2} p^{4.5} \log\frac{1}{\epsilon}\right)
\]
for \ref{eq:ph-SDP} and \ref{eq:ph-lift} respectively using the algorithm in \citet{Helm96} for example. If we use first-order solvers such as TFOCS \citep{Beck12}, based on the optimal algorithm in \citep{Nest83}, the dependence on the dimension can be further reduced, to become
\[
O\left(\frac{J^3\,p^3}{\epsilon}\right)
\quad \mbox{and} \quad
O\left(\frac{K J\,p^3}{\epsilon}\right)
\]
for solving a penalized version of the \ref{eq:ph-SDP} relaxation and the penalized formulation of \ref{eq:ph-lift} in~\eqref{eq:pen-lift}. While the dependence on the signal dimensions $p$ is somewhat reduced, the dependence on the target precision grows from $\log(1/\epsilon)$ to $1/\epsilon$. Finally, the iteration complexity of the block coordinate descent Algorithm~\ref{alg:block} is substantially lower and its convergence is linear, but no fully explicit bounds on the number of iterations are known in our case. The complexity of the method is then bounded by $O\left(\log\frac{1}{\epsilon}\right)$ but the constant in this bound depends on $n$ here, and the dependence cannot be quantified explicitly.

Algorithmic choices are ultimately guided by precision targets. If $\epsilon$ is large enough so that a first-order solver or a block coordinate descent can be used, the complexity of \ref{eq:ph-SDP} is not significantly better than that of \ref{eq:ph-lift}. On the contrary, when $\epsilon$ is small, we must use an interior point solver, for which \ref{eq:ph-SDP}'s complexity is an order of magnitude lower than that of \ref{eq:ph-lift} because its constraint matrices are singletons. In practice, the target value for $\epsilon$ strongly depends on the sampling matrix $A$. For example, when $A$ corresponds to the convolution by $6$ Gaussian random filters (\S\ref{ss:random_filters}), to reconstruct a Gaussian white noise of size $64$ with a relative precision of $\eta$, we typically need $\epsilon\sim 2.10^{-1}\eta$. For $4$ Cauchy wavelets (\S\ref{ss:cauchy_wavelets}), it is twenty times less, with $\epsilon\sim 10^{-2}\eta$. For other types of signals than Gaussian white noise, we may even need~$\epsilon\sim 10^{-3}\eta$.

\subsection{Greedy Refinement}\label{ss:completing}
If the \ref{eq:ph-SDP} or \ref{eq:ph-lift} algorithms do not return a rank one matrix then an approximate solution of the phase recovery problem is obtained by extracting a leading eigenvector $v$. For \ref{eq:ph-SDP} and \ref{eq:ph-lift}, $\tilde x = A^\dag\diag(b) v$ and $\tilde x = v$ are respectively approximate solutions of the phase recovery problem with $|A \tilde x| \neq b = |A x|$. This solution is then refined by applying the \ref{alg:GS} algorithm initialized with $\tilde x$. If $\tilde x$ is sufficiently close to $x$ then, according to numerical experiments of Section \ref{s:numres}, this greedy algorithm converges to $\lambda\,x$ with $|\lambda| = 1$. These greedy iterations require much less operations than \ref{eq:ph-SDP} and \ref{eq:ph-lift} algorithms, and thus have no significant contribution to the computational complexity.

\subsection{Sparsity}
Minimizing $\Tr(X)$ in the \ref{eq:ph-lift} problem means looking for signals which match the modulus constraints and have minimum $\ell_2$ norm. In some cases, we have a priori knowledge that the signal we are trying to reconstruct is sparse, i.e. $\Card(x)$ is small. The effect of imposing sparsity was studied in e.g. \citep{Mora07,Oshe12,Li12}. 

Assuming $n \leq p$, the set of solutions to $\|Ax-\diag(b)u\|_2$ is written $x=A^\dag \diag(b) u + Fv$ where $F$ is a basis for the nullspace of $A$. The reconstruction problem with a $\ell_1$ penalty promoting sparsity is then written
\[\BA{ll}
\mbox{minimize} &  \|AA^\dag\diag(b)u-\diag(b)u\|_2^2 + \gamma \|A^\dag \diag(b) u + Fv\|_1^2 \\
\mbox{subject to} & |u_i|=1,
\EA\]
in the variables $u\in\complex^p$ and $y\in\complex^{p-n}$. Using the fact that $\|y\|_1^2=\|yy^*\|_{\ell_1}$, this can be relaxed as
\[\BA{ll}
\mbox{minimize} &  \Tr(UM_3) + \gamma \|VUV^*\|_{\ell_1} \\
\mbox{subject to} & U \succeq 0,\,|U_{ii}|=1, \quad i=1,\ldots,n,
\EA\]
which is a semidefinite program in the (larger) matrix variable $U\in\herm_{p}$, with $M_3$ the matrix containing~$M$ in its upper left block and zeros elsewhere, and $V=(A^\dag\diag(b),F)$. 

On the other hand, when $n > p$ and $A$ is injective, the matrix $F$ disappears, $M_3=M$ and taking sparsity into account simply amounts to adding an~$l_1$ penalization to~\ref{eq:ph-SDP}. As noted in~\citep{Voro12a} however, the effect of an $\ell_1$ penalty on least-squares solutions is not completely clear.

\vskip 6ex
\section{Numerical results} \label{s:numres}
In this section, we compare the numerical performance of the \ref{alg:GS} (greedy), \ref{eq:ph-SDP} and \ref{eq:ph-lift} algorithms on various phase recovery problems. As in \citep{Cand11}, the \ref{eq:ph-lift} problem is solved using the package in \citep{Beck12}, with reweighting, using $K = 10$ outer iterations and $1000$ iterations of the first order algorithm. The \ref{eq:ph-SDP} and \ref{alg:GS} algorithms described here are implemented in a public software package available at 
\begin{center}
\small
\url{http://www.cmap.polytechnique.fr/scattering/code/phaserecovery.zip}
\end{center}
These phase recovery algorithms computes an approximate solution $\tilde x$ from $|A x|$ and the reconstruction error is measured by the relative Euclidean distance up to a complex phase given by
\begin{equation}
\label{errorsig}
\epsilon(x,\tilde x) \triangleq \min_{c \in \C,|c| = 1}\, \frac{\|x - c\, \tilde x\|} {\|x\|}.
\end{equation}
We also record the error over measured amplitudes, written
\begin{equation}
\label{errorsig2}
\epsilon(|A x| ,|A \tilde x|) 
\triangleq \frac{\|| A x| - |A \tilde x|\|} {\|A x\|}.
\end{equation}
Note that when the phase recovery problem either does not admit a unique solution or is unstable, we usually have $\epsilon (|Ax|,|A\tilde x|) \ll \epsilon(x,\tilde x)$. In the next three subsections, we study these reconstruction errors for three different phase recovery problems, where $A$ is defined as an oversampled Fourier transform, as multiple filterings with random filters, or as a wavelet transform. Numerical results are computed on three different types of test signals $x$: realizations of a complex Gaussian white noise, sums of complex exponentials~$a_\omega \, e^{i \omega m}$ with random frequencies $\omega$ and random amplitudes $a_\omega$ (the number of exponentials is random, around $6$), and signals whose real and imaginary parts are scan-lines of natural images. Each signal has $p=128$ coefficients. Figure~\ref{three_classes} shows the real part of sample signals, for each signal type.

\begin{figure}
\begin{tabular}{ccc}
\includegraphics[width=.33\textwidth]{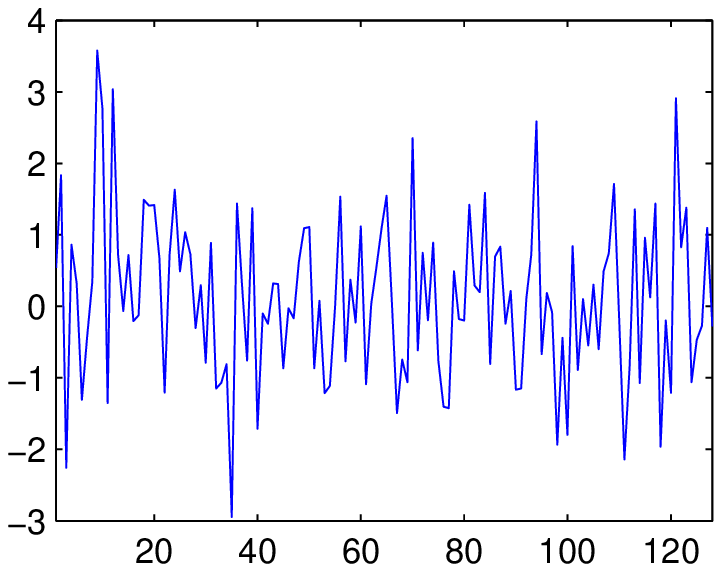} &
\includegraphics[width=.33\textwidth]{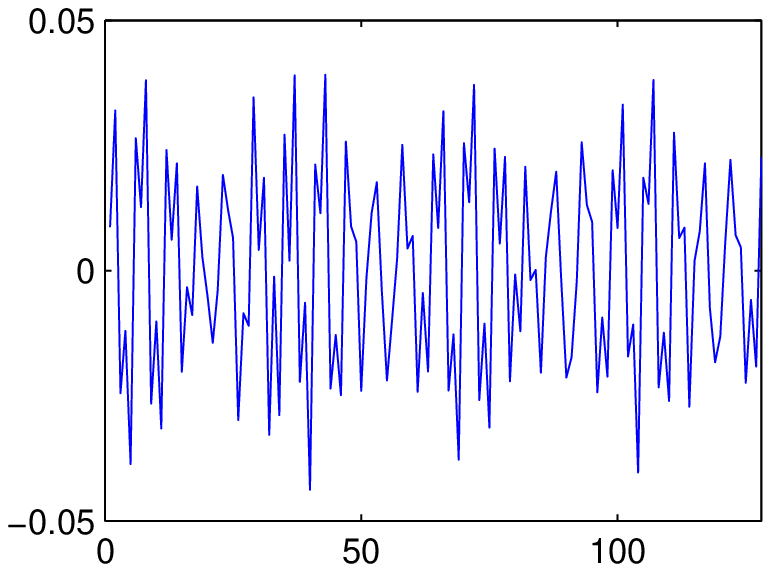}&
\includegraphics[width=.33\textwidth]{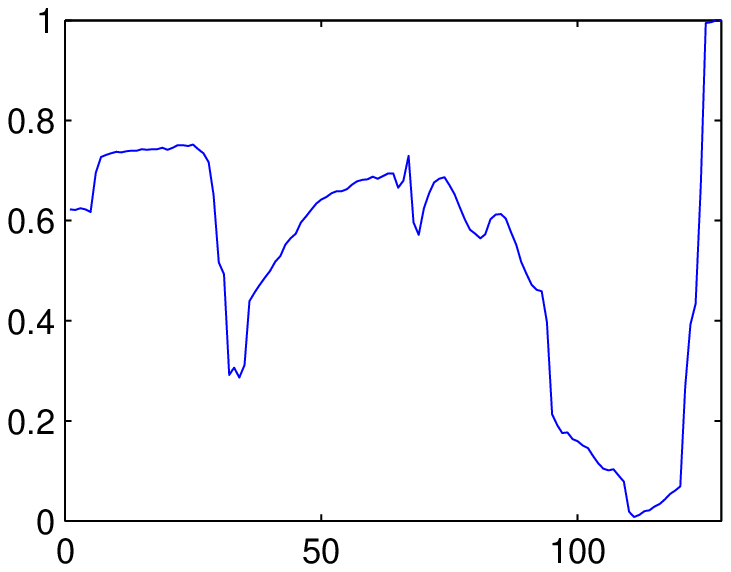}\\
(a) & (b) & (c)
\end{tabular}
\caption{Real parts of sample test signals. (a)~Gaussian white noise. (b)~Sum of 6 sinuoids of random frequency and random amplitudes. (c)~Scan-line of an image.}
\label{three_classes}
\end{figure}

\subsection{Oversampled Fourier Transform}

The discrete Fourier transform $\hat y$ of a signal $y$ of $q$ coefficients is written
\[
\hat y_k = \sum_{m=0}^{q-1} y_m\exp(\frac{-i 2 \pi  k m}{q})~.
\]
In X-ray crystallography or diffraction imaging experiments, compactly supported signals are estimated from the amplitude of Fourier transforms oversampled by a factor $J \geq 2$. The corresponding operator $A$ computes an oversampled discrete Fourier transform evaluated over $n = J p$ coefficients. The signal $x$ of size $p$ is extended into $x^J$ by adding $(J-1) p$ zeros and
\[
(A x)_k = \hat x^J_k = \underset{m=1}{\overset{p}{\sum}} x_m\exp(-\frac{i 2 \pi  k m}{n}).
\]
For this oversampled Fourier transform, the phase recovery problem does not have a unique solution
\citep{Akut56}. In fact, one can show \citep{sanz} that there are as many as $2^{p-1}$ solutions $\tilde x \in\C^p$ such that $|A \tilde x| = |A x|$. Moreover, increasing the oversampling factor $J$ beyond $2$ does not reduce the number of solutions. 

Because of this intrinsic instability, we will observe that all algorithms perform similarly on this type of reconstruction problems and Table~\ref{percentage} shows that the percentage of perfect reconstruction is below $5\%$ for all methods. The signals which are perfectly recovered are sums of few sinusoids. Because these test signals are very sparse in the Fourier domain, the number of signals having identical Fourier coefficient amplitudes is considerably smaller than in typical sample signals. As a consequence, there is a small probability (about~5\%) of exactly reconstructing the original signal given an arbitrary initialization. None of the Gaussian random noises and image scan lines are exactly recovered. Note that we say that an exact reconstruction is reached when $\epsilon(x,\tilde x) < 10^{-2}$ because a few iterations of the \ref{alg:GS} algorithm from such an approximate solution $\tilde x$ will typically converges to $x$. Numerical results are computed with 100 sample signals in each of the 3 signal classes. 

Table~\ref{error1} gives the average relative error $\epsilon(x,\tilde x)$ over signals that are not perfectly reconstructed, which is of order one here. Despite this large error, Table \ref{error2} shows that the relative error $\epsilon(|Ax|,|A\tilde x|)$ over the Fourier modulus coefficients is below $10^{-3}$ for all algorithms. This is due to the non-uniqueness of the phase recovery from Fourier modulus coefficients. 
Recovering a solution $\tilde x$ with identical or nearly identical oversampled
Fourier modulus coefficients as $x$ does not
guarantee that~$\tilde x$ is proportional to~$x$.
Overall, in this set of ill-posed Fourier experiments, recovery performance is very poor for all methods and the \ref{eq:ph-SDP} and \ref{eq:ph-lift} relaxations do not improve much on the results of the faster \ref{alg:GS} algorithm.

\begin{table}
\begin{tabular}{|c|c|c|c|}\hline
& Fourier & Random Filters & Wavelets\\\hline
\ref{alg:GS}&5\%&49\%&0\%\\\hline
\ref{eq:ph-lift} with reweighting&3\%&100\%&62\%\\\hline
\ref{eq:ph-SDP} &4\%&100\%&100\%\\\hline
\end{tabular}
\vskip0.5ex
\caption{Percentage of perfect reconstruction from $|Ax|$,
over 300 test signals, 
for the three different operators $A$ (columns) and the three algorithms (rows).
\label{percentage}}
\end{table}

\begin{table}
\begin{tabular}{|c|c|c|c|}\hline
& Fourier & Random Filters & Wavelets\\\hline
\ref{alg:GS}&0.9&1.2&1.3\\\hline
\ref{eq:ph-lift} with reweighting&0.8&exact&0.5\\\hline
\ref{eq:ph-SDP} &0.8&exact&exact\\\hline
\end{tabular}
\vskip0.5ex
\caption{Average relative signal reconstruction error $\epsilon(\tilde x , x)$
over all test signals that are not perfectly reconstructed,
for each operator $A$ and each algorithm. \label{error1}}

\end{table}

\begin{table}
\begin{tabular}{|c|c|c|c|}\hline
& Fourier & Random Filters & Wavelets\\\hline
\ref{alg:GS}&$9.10^{-4}$&0.2&0.3\\\hline
\ref{eq:ph-lift} with reweighting&$5.10^{-4}$&exact&$8.10^{-2}$\\\hline
\ref{eq:ph-SDP} &$6.10^{-4}$&exact&exact\\\hline
\end{tabular}
\vskip0.5ex
\caption{Average relative 
error $\epsilon(|A\tilde x| , |Ax|)$ of coefficient amplitudes,
over all test signals that are not perfectly reconstructed,
for each operator $A$ and each algorithm.\label{error2}}
\end{table}

\subsection{Multiple Random Illumination Filters\label{ss:random_filters}}

To guarantee uniqueness of the phase recovery problem, one can add independent measurements by ``illuminating'' the object through J filters $h^j$ in the context of X-ray imaging or crystallography \citep{Cand11a}. The resulting operator $A$ is the discrete Fourier transform of $x$ multiplied by each filter $h^j$ of size $p$
\[
(A x)_{k + p j} = \widehat {(x h^j)}_k  = (\widehat x \star \hat h^j)_k  \quad
\mbox{for $1 \leq j \leq J$ and $0 \leq k < p$,}
\]
where $\hat x \star \hat h^j $ is the circular convolution between $\hat x$ and $\hat h^j$. \citet{Cand11a} proved that, for some constant $C>0$ large enough, $Cp$ Gaussian independent measurements are sufficient to perfectly reconstruct a signal of size $p$, with high probability. Similarly, we would expect that, picking the filters $h^j$ as realizations of independent Gaussian random variables, perfect reconstruction will be guaranteed with high probability if $J$ is large enough (and independent of $p$). This result has not yet been proven because Gaussian filters do not give independent measurements but \citet{Cand11} observed that, empirically, for signals of size $p = 128$, with $J = 4$ filters, perfect recovery is achieved in $100\%$ of experiments.

Table~\ref{percentage} confirms this behavior and shows that the \ref{eq:ph-SDP} algorithm achieves perfect recovery in all our experiments. As predicted by the equivalence results presented in the previous section, we observe that \ref{eq:ph-SDP} and \ref{eq:ph-lift} have identical performance in these experiments. With $4$ filters, the solutions of  these two SDP relaxations are not of rank one but are ``almost'' of rank one, in the sense that their first eigenvector~$v$ has an eigenvalue much larger than the others, by a factor of about $5$ to $10$. Numerically, we observe that the corresponding approximate solutions, $\tilde x = \diag(v) b$, yield a relative error $\epsilon(|A x| , |A \tilde x|)$ which, for scan-lines of images and especially for Gaussian signals, is of the order of the ratio between the largest and the second largest eigenvalue of the matrix $U$. The resulting solutions $\tilde x$ are then sufficiently close to $x$ so that a few iterations of the \ref{alg:GS} algorithm started at $\tilde x$ will converge to $x$.


Table \ref{percentage} shows however that directly applying the \ref{alg:GS} algorithm starting from a random initialization point yields perfect recovery in only about $50\%$ of our experiments. This percentage decreases as the signal size $p$ increases. The average error $\epsilon(x , \tilde x)$ on non-recovered signals in Table \ref{error1} is $1.3$ whereas on the average error on the modulus $\epsilon(|Ax| , |A\tilde x|)$ is $0.2$.

\subsection{Wavelet Transform{\label{ss:cauchy_wavelets}}}

Phase recovery problems from the modulus of wavelet coefficients appear in audio signal processing where this modulus is used by many audio and speech recognition systems. These moduli also provide physiological models of cochlear signals in the ear \citep{shamma} and recovering audio signals from wavelet modulus coefficients is an important problem in this context. 

To simplify experiments, we consider wavelets dilated by dyadic factors $2^j$ which have a lower frequency resolution than audio wavelets. A discrete wavelet transform is computed by circular convolutions with discrete wavelet filters, i.e.
\[
(A x)_{k+j p} = (x \star \psi^j)_k = \sum_{m=1}^{p} x_m \psi^j_{k-m}\quad
\mbox{for $1 \leq j \leq J-1$ and $1 \leq k \leq p$}
\]
where $\psi^j_ m$  is a $p$ periodic wavelet filter. It is defined by dilating, sampling and periodizing a complex wavelet $\psi \in {\bf L^2}(\complex)$, with
\[
\psi^j_ m = \sum_{k=-\infty}^{\infty} \psi (2^{j} (m / p - k))\quad
\mbox{for $1 \leq m \leq p$}.
\]
Numerical computations are performed with a Cauchy wavelet whose Fourier transform is
\[
\hat \psi(\omega) = \omega^d\, e^{- \omega}\, {\bf 1}_{\omega > 0},
\]
up to a scaling factor, with $d = 5$. To guarantee that $A$ is an invertible operator, the lowest signal frequencies are carried by a suitable low-pass filter $\phi$ and
\[
(A x)_{k+J p} = (x \star \phi)_k \quad
\mbox{for $1 \leq k \leq p$.}
\]
One can prove that $x$ is always uniquely determined by $|Ax|$, up to a multiplication factor. When $x$ is real, the reconstruction appears to be numerically stable. Recall that the results of \S\ref{sss:real_valued_signal} allow us to explicitly impose the condition that $x$ is real in the \ref{eq:ph-SDP} recovery algorithm. For \ref{eq:ph-lift} in \citet{Cand11}, this condition is enforced by imposing that $X = x x^*$ is real. For the \ref{alg:GS} algorithm, when $x$ is real, we simply project at each iteration on the image of $\R^p$ by $A$, instead of projecting on the image of $\C^p$ by $A$.

Numerical experiments are performed on the real part of the complex test signals. Table \ref{percentage} shows that \ref{alg:GS} does not reconstruct exactly any real test signal from the modulus of its wavelet coefficients. The average relative  error $\epsilon(\tilde x , x)$ in Table \ref{error1} is $1.2$ where the coefficient amplitudes have an average error $\epsilon(|A\tilde x| , |Ax|)$ of $0.3$ in Table \ref{error2}.

\ref{eq:ph-lift} reconstructs $62\%$ of test signals, but the reconstruction rate varies with signal type. 
The proportions of exactly reconstructed signals among random noises,
sums of sinusoids and image scan-lines are  $27\%$, $60\%$ and $99\%$ respectively. Indeed, image scan-lines have 
a large proportion of wavelet coefficients whose amplitudes are negligible.
The proportion of phase coefficients having a strong impact on the reconstruction
of $x$ is thus much smaller for scan-line images than for random noises,
which reduces the number of significant variables to recover.
Sums of sinuoids of random frequency have wavelet coefficients whose sparsity
is intermediate between image scan-lines and Gaussian white noises, which
explains the intermediate performance of \ref{eq:ph-lift} on these signals.
The overall average error $\epsilon(\tilde x , x)$ on non-reconstructed signals
is $0.5$. Despite this relatively important
error, $\tilde x$ and $x$ are usually almost equal on most of their support, up to a sign switch, and the importance of the error is precisely due to the local phase inversions (which change signs).

The \ref{eq:ph-SDP} algorithm reconstructs exactly all test signals. Moreover, the recovered matrix $U$ is always of rank one and it is therefore not necessary to refine the solution with \ref{alg:GS} iterations. At first sight, this difference in performance between \ref{eq:ph-SDP} and \ref{eq:ph-lift} may seem to contradict the equivalence results of \S\ref{ss:relaxation_tightness} (which are valid when $x$ is real and when $x$ is complex). It can be explained however by the fact that $10$ steps of reweighing and $1000$ inner iterations per step are not enough to let \ref{eq:ph-lift} fully converge. In these experiments, the precision required to get perfect reconstruction is very high and, consequently, the number of first-order iterations required to achieve it is too large (see \S\ref{ss:complexity}). With an interior-point-solver, this number would be much smaller but the time required per iteration would become prohibitively large. The much simpler structure of the~\ref{eq:ph-SDP} relaxation allows us to solve these larger problems more efficiently.

\subsection{Impact of Trace Minimization\label{ss:role_of_trace}}
We saw in~\S\ref{ss:weak_version} that, in the absence of noise, \ref{eq:ph-SDP} was very similar to a simplified version of PhaseLift, \ref{eq:weak-ph-lift}, in which no trace minimization is performed. Here, we confirm empirically that \ref{eq:weak-ph-lift} and \ref{eq:ph-lift} are essentially equivalent. Minimizing the trace is usually used as rank minimization heuristic, with recovery guarantees in certain settings \citep{Faze03,Cand08,Chan12} but it does not seem to make much difference here. In fact, \citet{Dema12} recently showed that in the independent experiments setting, \ref{eq:weak-ph-lift} has a unique (rank one) solution with high probability, i.e. the feasible set of~\ref{eq:ph-lift} is a singleton and trace minimization has no impact. Of course, from a numerical point of view, solving the feasibility problem~\ref{eq:weak-ph-lift} is about as hard as solving the trace minimization problem~\ref{eq:ph-lift}, so the result \citep{Dema12} simplifies analysis but does not really affect numerical performance.

Figure \ref{weak} compares the performances of \ref{eq:ph-lift} and \ref{eq:weak-ph-lift} as a function of~$n$ (the number of measurements). We plot the percentage of successful reconstructions ({\em left}) and the percentage of cases where the relaxation was exact, i.e. the reconstructed matrix $X$ was rank one ({\em right}). The plot shows a clear phase transitions when the number of measurements increases. For \ref{eq:ph-lift}, these transitions happen respectively at $n=155\approx 2.5p$ and $n=285\approx 4.5p$, while for \ref{eq:weak-ph-lift}, the values become $n=170\approx 2.7p$ and $n=295\approx 4.6p$, so the transition thresholds are very similar. Note that, in the absence of noise, \ref{eq:weak-ph-lift} and \ref{eq:ph-SDP} have the same solutions, up to a linear transformation (see~\S\ref{ss:geometric_point_of_view}), so we can expect the same behavior in the comparison \ref{eq:ph-SDP} versus \ref{eq:ph-SDP2}.

\begin{figure}
\begin{tabular}{cc}
\psfrag{x}[t][b]{Number of measurements}
\psfrag{y}[b][t]{Reconstruction rate}
\psfrag{Weak PhaseLift}{\footnotesize Weak PhaseLift}
\psfrag{PhaseLift}{\footnotesize PhaseLift}
\includegraphics[width=.45\textwidth]{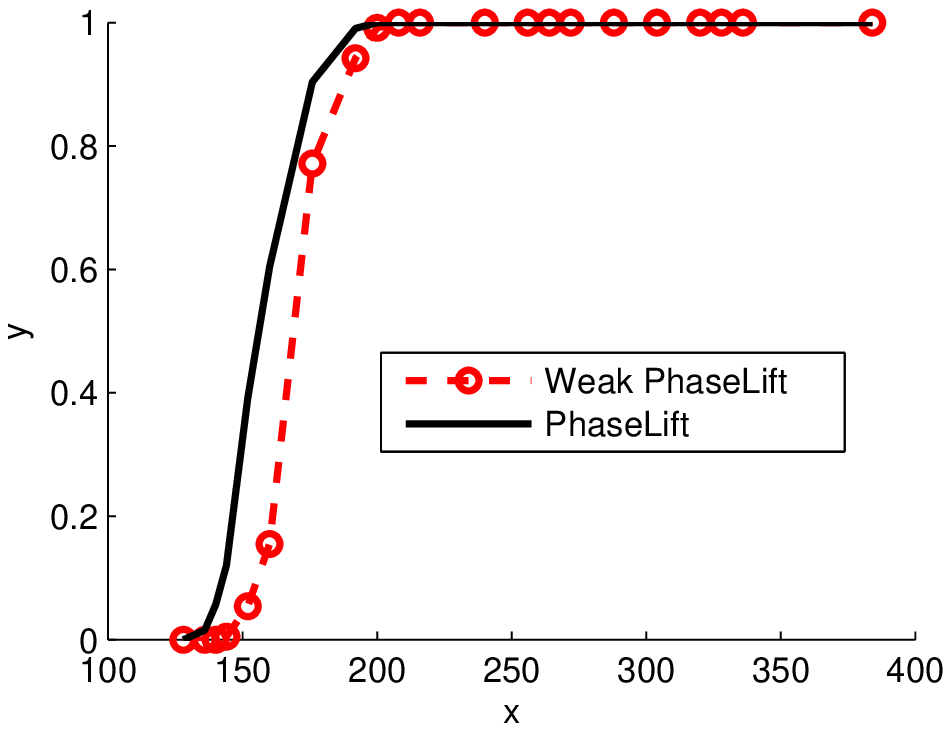} &
\psfrag{x}[t][b]{Number of measurements}
\psfrag{y}[b][t]{Proportion of rank one sols.}
\psfrag{Weak PhaseLift}{\footnotesize Weak PhaseLift}
\psfrag{PhaseLift}{\footnotesize PhaseLift}
\includegraphics[width=.45\textwidth]{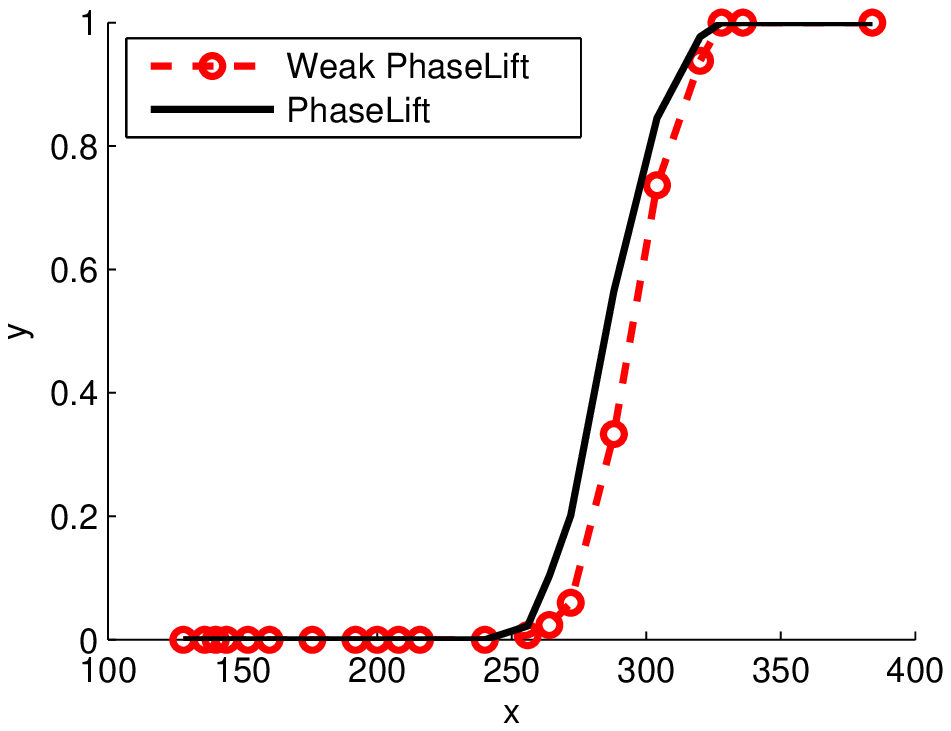}
\end{tabular}
\caption{Comparison of \ref{eq:ph-lift} and \ref{eq:weak-ph-lift} performance, for $64$-sized signals, as a function of the number of measurements. Reconstruction rate, after \ref{alg:GS} iterations {\em (left)} and proportion of rank one solutions {\em (right)}. \label{weak}}
\end{figure}

\subsection{Reconstruction in the Presence of Noise\label{ss:with_noise}}
Numerical stability is crucial for practical applications. In this last subsection, we suppose that the vector $b$ of measurements is of the form
\begin{equation*}
b = |Ax| + b_{\rm noise}
\end{equation*}
with $\|b_{\rm noise}\|_2=o(\|Ax\|_2)$. In our experiments, $b_{\rm noise}$ is always a Gaussian white noise. Two reasons can explain numerical instabilities in the solution $\tilde x$. First, the reconstruction problem itself can be unstable, with $\|\tilde x-cx\|\gg\||A\tilde x|-|Ax|\|$ for all $c\in\C$. Second, the algorithm may fail to reconstruct $\tilde{x}$ such that $\||A\tilde{x}|-b\| \approx \|b_{\rm noise}\|$. No algorithm can overcome the first cause but good reconstruction methods will overcome the second one. In the following paragraphs, to complement the results in \S\ref{ss:noisy_measurements}, we will demonstrate empirically that \ref{eq:ph-SDP} is stable, and compare its performances with \ref{eq:ph-lift}. We will observe in particular that \ref{eq:ph-SDP} appears to be more stable than \ref{eq:ph-lift} when $b$ is sparse.

\subsubsection{Wavelet transform}
Figure \ref{bruit_cw4_gs} displays the performance of \ref{eq:ph-SDP} in the wavelet transform case. It shows that \ref{eq:ph-SDP} is stable up to around $5-10\%$ of noise. Indeed, the reconstructed $\tilde{x}$ usually satisfies $\epsilon(|Ax|,|A\tilde x|)=\|\,|Ax|-|A\tilde{x}|\,\|_2\leq\|b_{\rm noise}\|_2$, which is the best we can hope for.  Wavelet transform is a case where the underlying phase retrieval problem may present instabilities, therefore the reconstruction error $\epsilon(x,\tilde x)$ is sometimes much larger than $\epsilon(|Ax|,|A\tilde x|)$. This remark applies especially to sums of sinusoids, which represent the most unstable case.

When all coefficients of $Ax$ have approximately the same amplitude, \ref{eq:ph-lift} and \ref{eq:ph-SDP} produce similar results, but when $Ax$ is sparse, \ref{eq:ph-lift} appears less stable. We gave a qualitative explanation of this behavior at the end of \S\ref{ss:noisy_measurements} which seems to be confirmed by the results in Figure \ref{bruit_cw4_gs}. This boils down to the fact that the values of the phase variables in~\ref{eq:ph-SDP} corresponding to zeros in $b$ can be set to zero so the problem becomes much smaller. Indeed, the performance of \ref{eq:ph-lift} and \ref{eq:ph-SDP} are equivalent in the case of Gaussian random filters (where measurements are never sparse), they are a bit worse in the case of sinusoids (where measurements are sometimes sparse) and quite unsatisfactory for scan-lines of images (where measurements are always sparse).

\begin{figure}
\begin{tabular}{cc}
\bf Gaussian & \bf Sinusoids\\
\psfrag{xx}[t][b]{Amount of noise}
\psfrag{yy}[b][t]{Relative error}
\includegraphics[width=.4\textwidth]{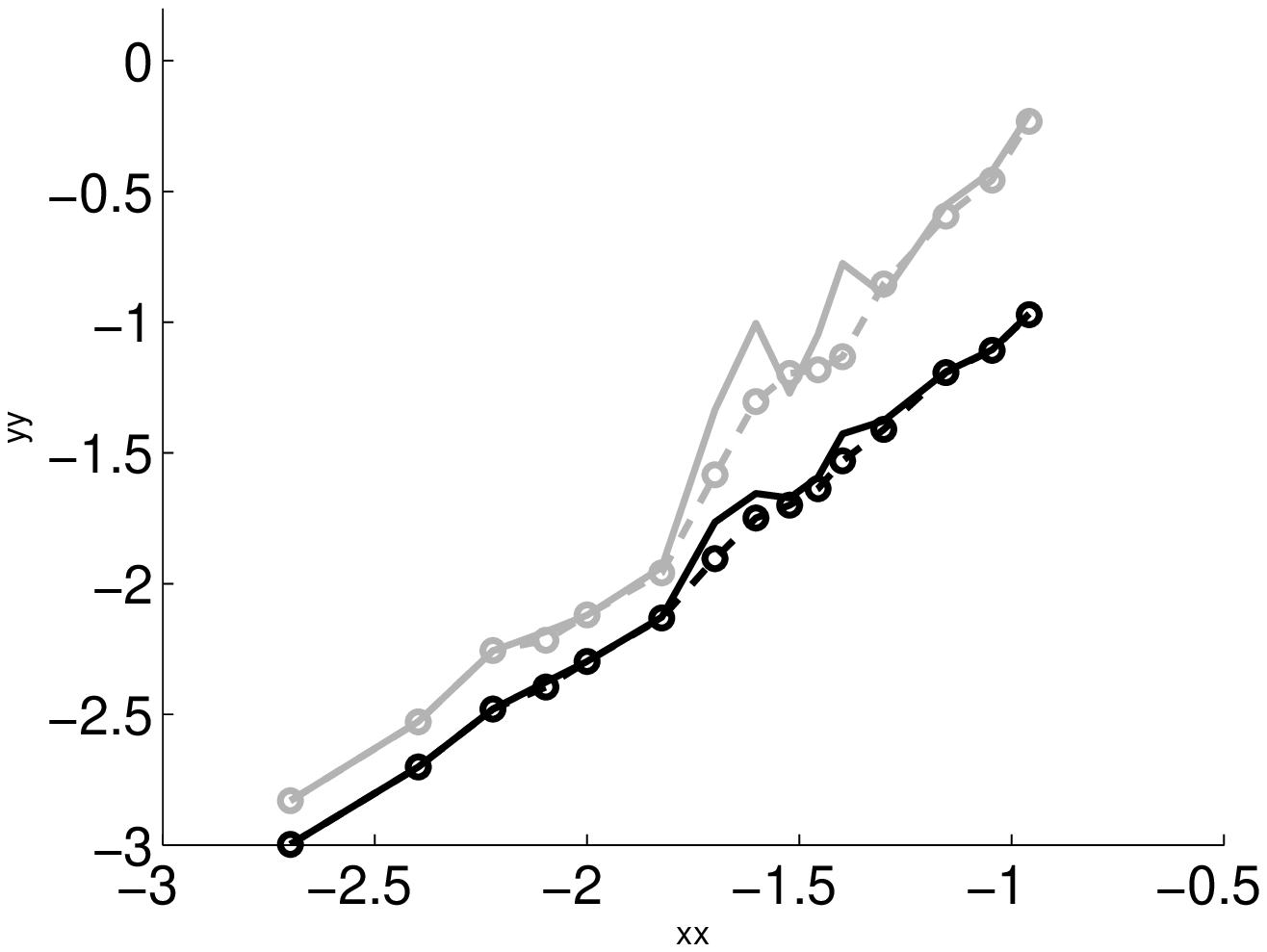}&
\psfrag{xx}[t][b]{Amount of noise}
\psfrag{yy}[b][t]{Relative error}
\includegraphics[width=.4\textwidth]{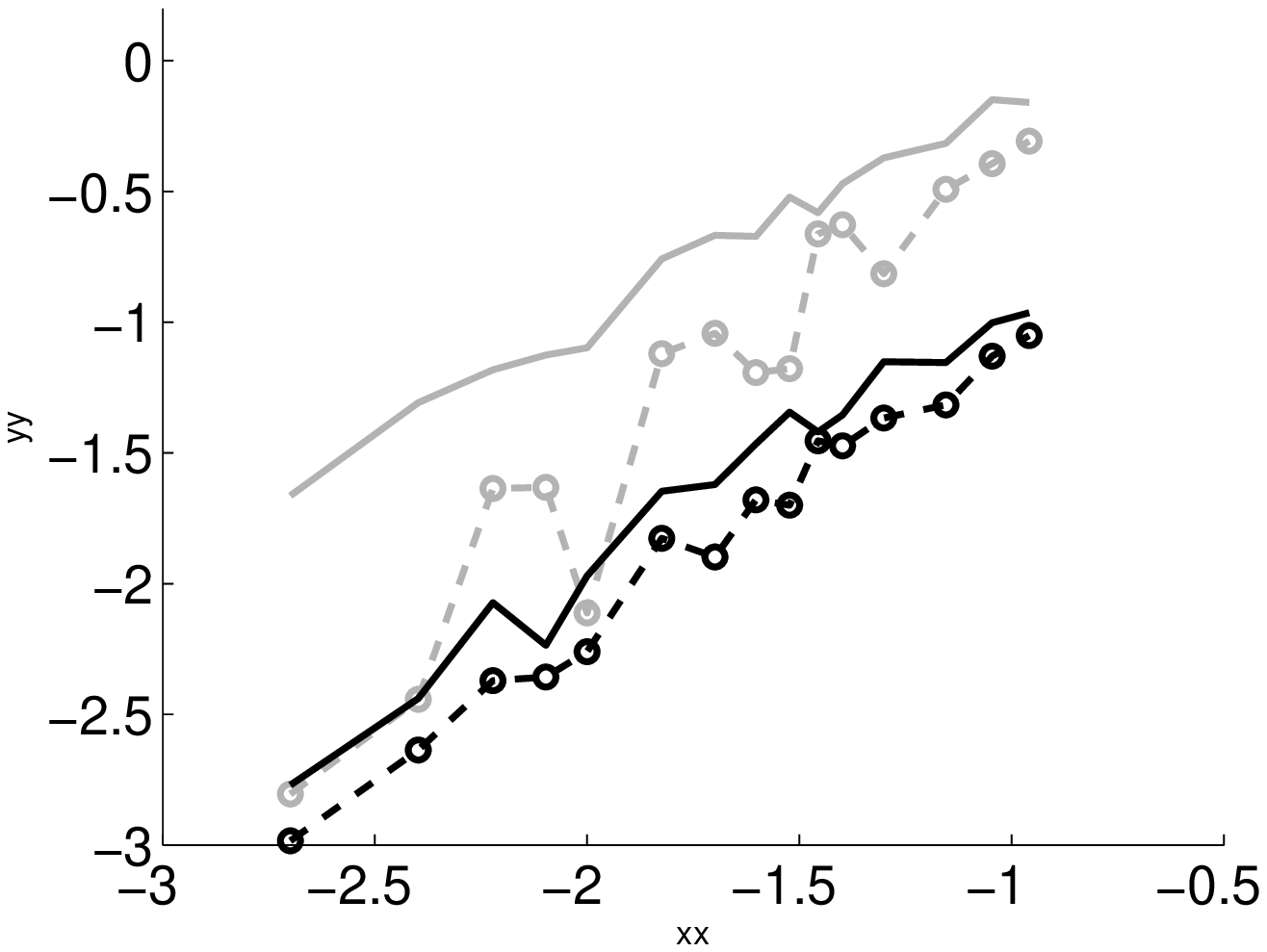}\\
\textbf{ }\\
\bf Image Scan-Lines & \\
\psfrag{xx}[t][b]{Amount of noise}
\psfrag{yy}[b][t]{Relative error}
\includegraphics[width=.4\textwidth]{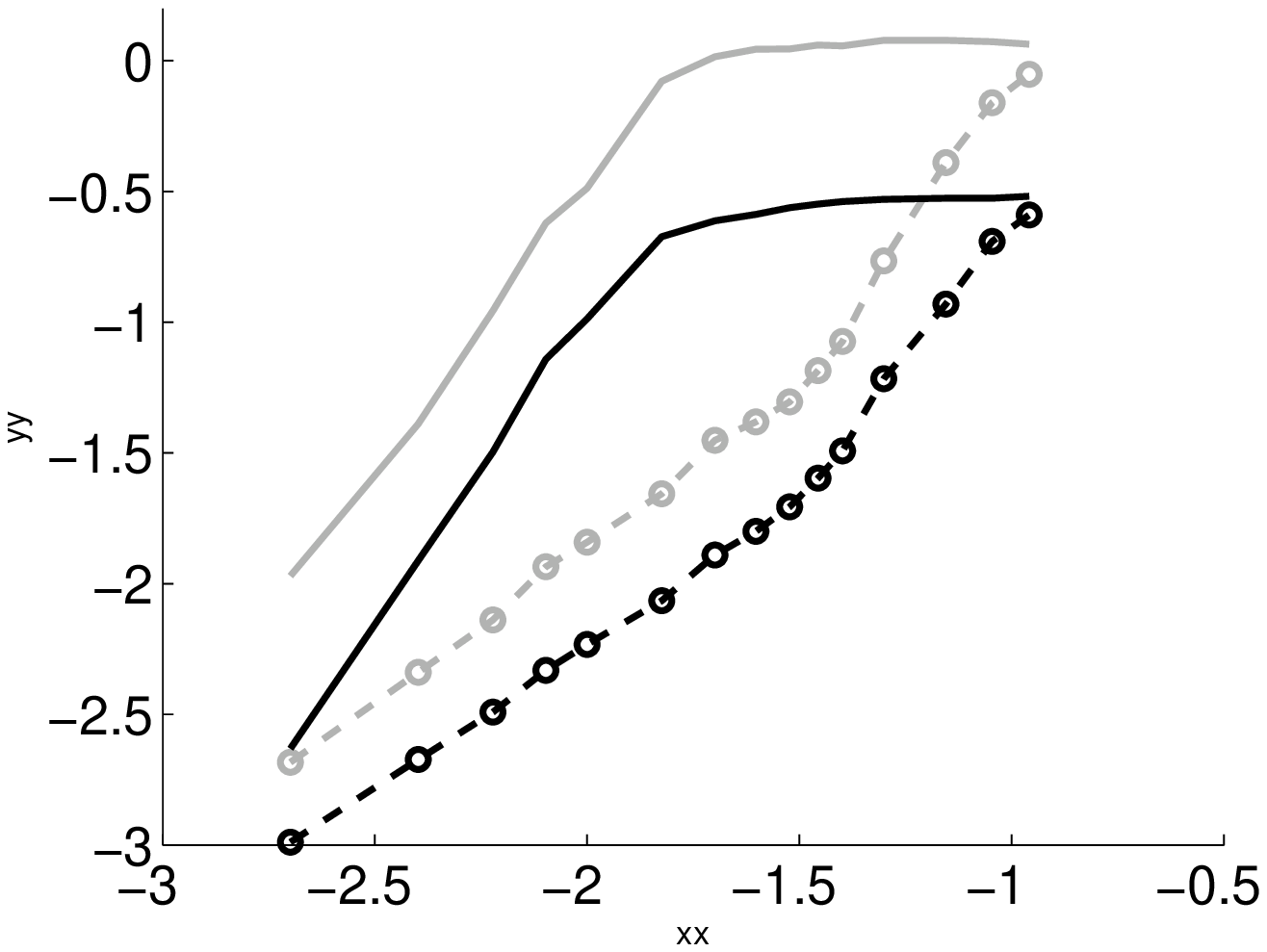}&
\begin{minipage}{0.35\textwidth}
\vskip -30ex
\includegraphics[width=\textwidth]{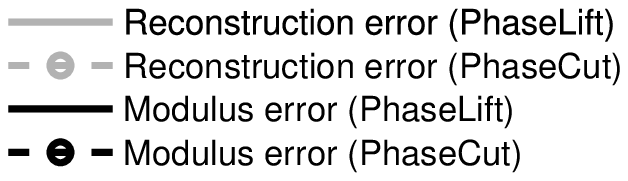}
\end{minipage}
\end{tabular}
\caption{Mean reconstruction errors versus amount of noise for \ref{eq:ph-lift} and \ref{eq:ph-SDP}, both in decimal logarithmic scale, for three types of signals: Gaussian white noises, sums of sinusoids and scan-lines of images. Both algorithms were followed by a few hundred \ref{alg:GS} iterations. \label{bruit_cw4_gs}}
\end{figure}

\subsubsection{Multiple random illumination filters}

\citet{Cand12} proved that, if $A$ was a Gaussian matrix, the reconstruction problem was stable with high probability, and \ref{eq:ph-lift} reconstructed a $\tilde{x}$ such that
\begin{equation*}
\epsilon(\tilde{x},x)\leq O\left(\frac{\|b_{\rm noise}\|_2}{\|Ax\|_2}\right).
\end{equation*}
The same result seems to hold for $A$ corresponding to Gaussian random illumination filters (cf. \S\ref{ss:random_filters}). Moreover, \ref{eq:ph-SDP} is as stable as \ref{eq:ph-lift}. Actually, up to $20\%$ of noise, when followed by some \ref{alg:GS} iterations, \ref{eq:ph-SDP} and \ref{eq:ph-lift} almost always reconstruct the same function. Figure~\ref{bruit_grf4_gs} displays the corresponding empirical performance, confirming that both algorithms are stable. The relative reconstruction errors are approximately linear in the amount of noise, with
\begin{equation*}
\epsilon(|A\tilde{x}|,|Ax|)\approx 0.8\times\frac{\|b_{\rm noise}\|_2}{\|Ax\|_2}
\quad \mbox{and} \quad
\epsilon(\tilde{x},x)\approx 2\times\frac{\|b_{\rm noise}\|_2}{\|Ax\|_2}
\end{equation*}
in our experiments.

\begin{figure}
\begin{tabular}{ccc}
\psfrag{xx}[t][b]{Amount of noise}
\psfrag{yy}[b][t]{Relative error}
\psfrag{Reconstruction error (PhaseLift)}{\tiny Reconstr. error (PhaseLift)}
\psfrag{Reconstruction error (PhaseCut)}{\tiny Reconstr. error (PhaseCut)}
\psfrag{Modulus error (PhaseLift)}{\tiny Modulus error (PhaseLift)}
\psfrag{Modulus error (PhaseCut)}{\tiny Modulus error (PhaseCut)}
\includegraphics[width=.4\textwidth]{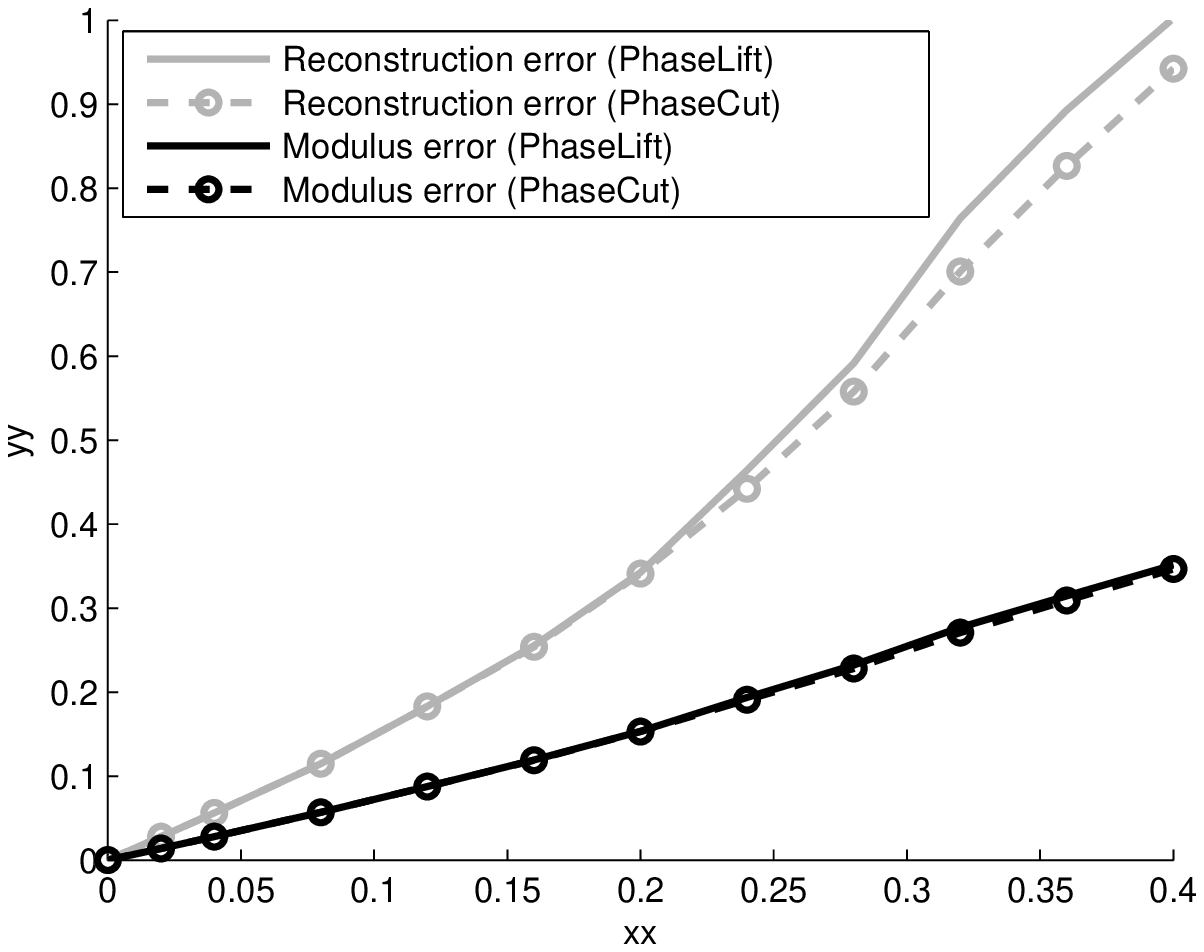} & &
\psfrag{y}[b][t]{Reconstruction error}
\psfrag{x}[t][b]{Amount of noise}
\psfrag{PhaseLift (for non-sparse measurements)}{\tiny PhaseLift (non-sparse)}
\psfrag{PhaseCut (for non-sparse measurements)}{\tiny PhaseCut (non-sparse)}
\psfrag{PhaseLift (for sparse measurements)}{\tiny PhaseLift (sparse)}
\psfrag{PhaseCut (for sparse measurements)}{\tiny PhaseCut (sparse)}
\includegraphics[width=.4\textwidth]{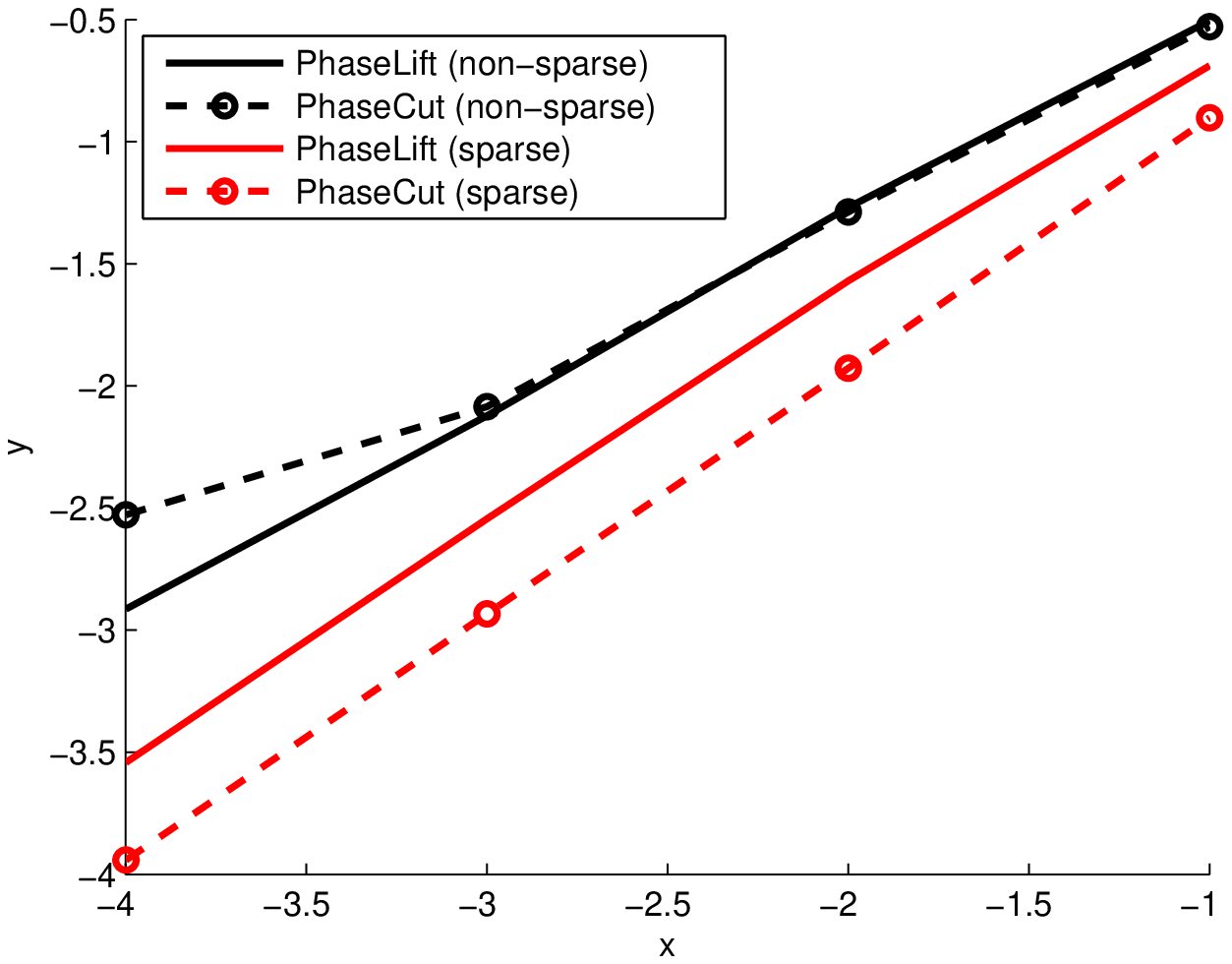}
\end{tabular}
\caption{{\em Left:} Mean performances of \ref{eq:ph-lift} and \ref{eq:ph-SDP}, followed by~\ref{alg:GS} iterations, for four Gaussian random illumination filters.  The $x$-axis represents the relative noise level, ${\|b_{\rm noise}\|_2}/{\|Ax\|_2}$ and the $y$-axis the relative error on the result, which is either $\epsilon(\tilde{x},x)$ or $\epsilon(|A\tilde{x}|,|Ax|)$. {\em Right:} Loglog plot of the relative error over the matrix reconstructed by \ref{eq:ph-lift} (resp. \ref{eq:ph-SDP}) when $A$ represents the convolution by five Gaussian filters. Black curves correspond to $Ax$ non-sparse, red ones to sparse $Ax$.}
\label{bruit_grf4_gs}
\end{figure}


The impact of the sparsity of $b$ discussed in the last paragraph may seem irrelevant here: if $A$ and $x$ are independently chosen, $Ax$ is never sparse. However, if we do not choose $A$ and $x$ independently, we may achieve partial sparsity. We performed tests for the case of five Gaussian random filters, where we chose $x\in\C^{64}$ such that $(Ax)_k=0$ for $k\leq 60$. This choice has no particular physical interpretation but it allows us to check that the influence of sparsity in $|Ax|$ over \ref{eq:ph-lift} is not specific to the wavelet transform. Figure \ref{bruit_grf4_gs} displays the relative error over the reconstructed matrix in the sparse and non-sparse cases. If we denote by $X_{\rm pl}\in\C^{p\times p}$ (resp. $X_{\rm pc}\in\C^{n\times n}$) the matrix reconstructed by \ref{eq:ph-lift} (resp. \ref{eq:ph-SDP}), this relative error is defined by
\begin{gather*}
\epsilon = \frac{\|AX_{\rm pl}A^*-(Ax)(Ax)^*\|_2}{\|(Ax)(Ax)^*\|_2}
\hskip 1cm \mbox{(for PhaseLift)}\\
\epsilon = \frac{\|\diag(b)X_{\rm pc}\diag(b)-(Ax)(Ax)^*\|_2}{\|(Ax)(Ax)^*\|_2}
\hskip 1cm \mbox{(for PhaseCut)}
\end{gather*}
In the non-sparse case, both algorithms yield very similar error $\epsilon\approx 7 {\|b_{\rm noise}\|_2}/{\|Ax\|_2}$ (the difference for a relative noise of $10^{-4}$ may come from a computational artifact). In the sparse case, there are less phases to reconstruct, because we do not need to reconstruct the phase of null measurements. Consequently, the problem is better constrained and we expect the algorithms to be more stable. Indeed, the relative errors over the reconstructed matrices are smaller. However, in this case, the performance of \ref{eq:ph-lift} and \ref{eq:ph-SDP} do not match anymore: $\epsilon\approx 3 {\|b_{\rm noise}\|_2}/{\|Ax\|_2}$ for \ref{eq:ph-lift} and $\epsilon\approx 1.2 {\|b_{\rm noise}\|_2}/{\|Ax\|_2}$ for \ref{eq:ph-SDP}. This remark has no practical impact in our particular example here because taking a few \ref{alg:GS} iterations would likely make both methods converge towards the same solution, but it confirms the importance of accounting for the sparsity of $|Ax|$.

\section*{Acknowledgments} The authors are grateful to Richard Baraniuk, Emmanuel Cand\`es, Rodolphe Jenatton, Amit Singer and Vlad Voroninski for very constructive comments. In particular, Vlad Voroninski showed in \citep{Voro12a} that the argument in the first version of this paper, proving that PhaseCutMod is tight when PhaseLift is, could be reversed under mild technical conditions and pointed out an error in our handling of sparsity constraints. AA would like to acknowledge support from a starting grant from the European Research Council (project SIPA), and SM acknowledges support from ANR grant BLAN 012601.

\appendix
\section{Technical lemmas}
We now prove two technical lemmas used in the proof of Theorem~\ref{thm:stability}.
\begin{lemma}\label{lem:ecart_V_PC}
Under the assumptions and notations of Theorem~\ref{thm:stability}, we have
\begin{equation*}
\|V_{PC}^\sslash-(Ax_0)(Ax_0)^*\|_2> 2C\|Ax_0\|_ 2\|b_{\rm n,PC}\|_2
\end{equation*}
\end{lemma}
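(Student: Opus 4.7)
The plan is to prove that $V_{PC}$ is close to $V_{PC}^\sslash = AA^\dag V_{PC} AA^\dag$ in Frobenius norm, specifically $\|V_{PC}-V_{PC}^\sslash\|_2 \leq 3\|Ax_0\|_2\|b_{\rm n,PC}\|_2$, and then conclude by the triangle inequality (since $D = 2C+2\sqrt{2}+1 > 2C+3$):
\[
\|V_{PC}^\sslash-(Ax_0)(Ax_0)^*\|_2 \geq \|V_{PC}-(Ax_0)(Ax_0)^*\|_2 - \|V_{PC}-V_{PC}^\sslash\|_2 > 2C\|Ax_0\|_2\|b_{\rm n,PC}\|_2.
\]

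The key step is to control $d_1(V_{PC},\mathcal{F}) = \Tr(V_{PC}(\idm-AA^\dag))$ by exhibiting a good feasible competitor for~\eqref{eq:ph-cut-geom}. I would set $\delta_i = (b_{\rm n,PC})_i\,e^{i\arg((Ax_0)_i)}$ (choosing $|\delta_i|=b_i$ arbitrarily when $(Ax_0)_i=0$), so that $v = Ax_0 + \delta$ satisfies $|v_i| = b_i$ and $\|\delta\|_2 = \|b_{\rm n,PC}\|_2$. Then $V^c = vv^* \in \herm_n^+\cap\mathcal{H}_b$ is feasible, and Lemma~\ref{lem:dist_to_F} combined with $Ax_0\in\Range(A)$ gives
\[
d_1(V^c,\mathcal{F}) = \|(\idm-AA^\dag)(Ax_0+\delta)\|_2^2 = \|(\idm-AA^\dag)\delta\|_2^2 \leq \|b_{\rm n,PC}\|_2^2.
\]
Optimality of $V_{PC}$ then yields the crucial \emph{quadratic-in-noise} bound $\Tr(V_{PC}(\idm-AA^\dag)) \leq \|b_{\rm n,PC}\|_2^2$.

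From there, the remaining steps are routine block-matrix estimates. Decomposing $V_{PC}$ along $\Range(A)\oplus\Range(A)^\perp$ into diagonal blocks $V_1, V_3$ and off-diagonal block $V_2$, one has $\|V_{PC}-V_{PC}^\sslash\|_2^2 = 2\|V_2\|_2^2 + \|V_3\|_2^2$. Positivity of $V_3$ together with the bound above gives $\|V_3\|_2 \leq \Tr(V_3) \leq \|b_{\rm n,PC}\|_2^2$, and a standard PSD block estimate (factor $V_{PC}=WW^*$ with row blocks $W_1,W_2$ aligned with the two subspaces and apply cyclic trace bounds) yields $\|V_2\|_2^2 \leq \|V_1\|_\infty\,\Tr(V_3)$. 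Using $\|V_1\|_\infty \leq \Tr(V_{PC}) = \|b\|_2^2 \leq 4\|Ax_0\|_2^2$ (which follows from $\|b_{\rm n,PC}\|_2 \leq \|Ax_0\|_2$, so that $\|b\|_2 \leq 2\|Ax_0\|_2$) gives $\|V_2\|_2 \leq 2\|Ax_0\|_2\|b_{\rm n,PC}\|_2$ and $\|V_3\|_2 \leq \|Ax_0\|_2\|b_{\rm n,PC}\|_2$, so that $\|V_{PC}-V_{PC}^\sslash\|_2 \leq 3\|Ax_0\|_2\|b_{\rm n,PC}\|_2$, as required.

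The main obstacle I expect is finding the right competitor. A naive choice such as $\diag(r)(Ax_0)(Ax_0)^*\diag(r)$ with $r_i = b_i/|(Ax_0)_i|$ only produces a bound on $d_1(V_{PC},\mathcal{F})$ that is linear in $\|b_{\rm n,PC}\|_2$, and then the PSD block estimate on $V_2$ degrades to $\|V_2\|_2 = O(\|Ax_0\|_2^{3/2}\|b_{\rm n,PC}\|_2^{1/2})$ - larger than the target $O(\|Ax_0\|_2\|b_{\rm n,PC}\|_2)$ in the small-noise regime. The additive construction $v = Ax_0+\delta$ exploits $Ax_0\in\Range(A)$ so that $\idm-AA^\dag$ annihilates the leading term, giving the quadratic bound $\|b_{\rm n,PC}\|_2^2$ that makes the whole argument go through.
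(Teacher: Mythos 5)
Your proof is correct and follows essentially the same route as the paper: the same phase-aligned competitor $vv^*$ with $v=Ax_0+\delta$ giving the quadratic bound $\Tr(V_{PC}(\idm-AA^\dag))\leq\|b_{\rm n,PC}\|_2^2$ via optimality in~\eqref{eq:ph-cut-geom} and Lemma~\ref{lem:dist_to_F}, followed by the same block decomposition along $\Range(A)\oplus\Range(A)^\perp$, a Cauchy--Schwarz-type PSD estimate on the off-diagonal block, and a final triangle inequality against the assumed $D$-instability. The only difference is cosmetic: you bound the off-diagonal block by $\|V_2\|_2^2\leq\|V_1\|_\infty\Tr(V_3)$ through a factorization, where the paper uses the entrywise estimate $2\sum_{i,j}|f_i^*V_{PC}g_j|^2\leq 2\Tr V_{PC}^\sslash\,\Tr V_{PC}^\perp$, so you end up with the slightly cruder constant $3$ in place of the paper's $2\sqrt{2}+1$, which still suffices since $D-3>2C$.
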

\begin{proof} We first give an upper bound of $\|V_{PC}-V_{PC}^\sslash\|_2$. We use the Cauchy-Schwarz inequality : for every positive matrix $X$ and all $x,y$, $|x^*Xy|\leq\sqrt{x^*Xx}\sqrt{y^*Xy}$. Let $\{f_i\}$ be an hermitian base of $\Range(A)$ diagonalizing $V_{PC}^\sslash$ and $\{g_i\}$ an hermitian base of $\Range(A)^\perp$ diagonalizing $V_{PC}^\perp$. As $\{f_i\}\cap\{g_i\}$ is an hermitian base of $\C^n$, we have
\begin{align}
\|V_{PC}-V_{PC}^\sslash\|_2^2&=\underset{i,i'}{\sum}|f_i^*(V_{PC}-V_{PC}^\sslash)f_{i'}|^2+\underset{i,j}{\sum}|f_i^*(V_{PC}-V_{PC}^\sslash)g_j|^2\nonumber\\
&\,\,\,\,\,+\underset{i,j}{\sum}|g_j^*(V_{PC}-V_{PC}^\sslash)f_i|^2+\underset{j,j'}{\sum}|g_j^*(V_{PC}-V_{PC}^\sslash)g_{j'}|^2\nonumber\\
&=2\underset{i,j}{\sum}|f_i^*(V_{PC})g_j|^2+\underset{i}{\sum}|g_i^*(V_{PC}^\perp)g_i|^2\nonumber\\
&\leq 2\underset{i,j}{\sum}|f_i^*(V_{PC})f_i\|g_j^*(V_{PC})g_j|+(\underset{i}{\sum} g_i^*(V_{PC}^\perp)g_i)^2\nonumber\\
&=2\Tr V_{PC}^\sslash\Tr V_{PC}^\perp+(\Tr V_{PC}^\perp)^2\nonumber\\
&\leq\left(\sqrt{2}\sqrt{\Tr V_{PC}^\sslash}\sqrt{\Tr V_{PC}^\perp}+\Tr V_{PC}^\perp\right)^2\label{eq:v_pc}
\end{align}

Let us now bound $\Tr V_{PC}^\perp$. We first note that $\Tr V_{PC}^\perp=\Tr((\idm-AA^\dag)V_{PC}(\idm-AA^\dag))=\Tr(V_{PC}(\idm-AA^\dag))=d_1(V_{PC},\mathcal{F})$ (according to lemma \ref{lem:dist_to_F}). Let $u\in\C^n$ be such that, for all $i$, $|u_i|=1$ and $(Ax_0)_i=u_i|Ax_0|_i$. We set $b=|Ax_0|+b_{\rm n,PC}$ and $V=(b\times u)(b\times u)^*$. As $V\in\herm_n^+\cap\mathcal{H}_b$ and $V_{PC}$ minimizes \eqref{eq:ph-cut-geom},
\BEAS
\Tr V_{PC}^\perp&=&d_1(V_{PC},\mathcal{F})\leq d_1(V,\mathcal{F})=d_1((Ax_0+b_{\rm n,PC}u)(Ax_0+b_{\rm n,PC}u)^*,\mathcal{F})\nonumber\\
&=&d_1((b_{\rm n,PC}u)(b_{\rm n,PC}u)^*,\mathcal{F})\nonumber\\
&\leq&\|(b_{\rm n,PC}u)(b_{\rm n,PC}u)^*\|_1=\Tr(b_{\rm n,PC}u)(b_{\rm n,PC}u)^*=\|b_{\rm n,PC}\|_2^2\label{eq:tr_v_pc_perp}
\EEAS
We also have $\Tr V_{PC}^\sslash=\Tr V_{PC}-\Tr V_{PC}^\perp$. This equality comes from the fact that, if $\{f_i\}$ is an hermitian base of $\Range(A)$ and $\{g_i\}$ an hermitian base of $\Range(A)^\perp$, then
\[
\Tr V_{PC}=\underset{i}{\sum}f_iV_{PC}f_i^*+\underset{i}{\sum}g_iV_{PC}g_i^*=\underset{i}{\sum}f_iV_{PC}^\sslash f_i^*+\underset{i}{\sum}g_iV_{PC}^\perp g_i^*=\Tr V_{PC}^\sslash+\Tr V_{PC}^\perp
\]
As $V_{PC}^\perp\succeq 0$, $\Tr V_{PC}^\sslash\leq\Tr V_{PC}=\||Ax_0|+b_{\rm n,PC}\|_2^2$ and, by combining this with relations~\eqref{eq:v_pc} and \eqref{eq:tr_v_pc_perp}, we get
\begin{align*}
\|V_{PC}-V_{PC}^\sslash\|_2&\leq \sqrt{2}\||Ax_0|+b_{\rm n,PC}\|_2\|b_{\rm n,PC}\|_2+\|b_{\rm n,PC}\|_2^2\\
&\leq \sqrt{2}\|Ax_0\|_2\|b_{\rm n,PC}\|_2+(1+\sqrt{2})\|b_{\rm n,PC}\|_2^2
\end{align*}
And, by reminding that we assumed $\|b_{\rm n,PC}\|_2\leq \|Ax_0\|_2$,
\begin{align*}
\|V_{PC}^\sslash-(Ax_0)(Ax_0)^*\|_2&\geq\|V_{PC}-(Ax_0)(Ax_0)^*\|_2-\|V_{PC}^\sslash-V_{PC}\|_2\\
&> D\|Ax_0\|_2\|b_{\rm n,PC}\|_2-\sqrt{2}\|Ax_0\|_2\|b_{\rm n,PC}\|_2-(1+\sqrt{2})\|b_{\rm n,PC}\|_2^2\\
&\geq (D-2\sqrt{2}-1)\|Ax_0\|_2\|b_{\rm n,PC}\|_2=2C\|Ax_0\|_2\|b_{\rm n,PC}\|_2
\end{align*}
which concludes the proof.
\end{proof}

\begin{lemma}\label{lem:ecart_b}
Under the assumptions and notations of Theorem~\ref{thm:stability}, we have $\|b_{\rm n,PL}\|_2\leq 2\|b_{\rm n,PC}\|$.
\end{lemma}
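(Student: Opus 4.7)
My plan is to directly compute $b_{\rm n,PL}$ in terms of $b_{\rm n,PC}$ and the ``perpendicular part'' of $V_{PC}$, then reuse the trace bound $\Tr V_{PC}^\perp \leq \|b_{\rm n,PC}\|_2^2$ that was already established in the proof of Lemma~\ref{lem:ecart_V_PC}.

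Since $\diag(V_{PC})=b^2=(|Ax_0|+b_{\rm n,PC})^2$, we have $\sqrt{V_{PC,ii}}=|Ax_0|_i+b_{\rm n,PC,i}$, so the definition of $b_{\rm n,PL}$ gives the pointwise identity
\begin{equation*}
b_{\rm n,PL,i}=\bigl(\sqrt{V_{PC,ii}^\sslash}-\sqrt{V_{PC,ii}}\bigr)+b_{\rm n,PC,i}.
\end{equation*}
Applying the triangle inequality and the elementary estimate $(a+b)^2\leq 2a^2+2b^2$ I would get
\begin{equation*}
\|b_{\rm n,PL}\|_2^2\leq 2\sum_i\bigl(\sqrt{V_{PC,ii}^\sslash}-\sqrt{V_{PC,ii}}\bigr)^2+2\|b_{\rm n,PC}\|_2^2,
\end{equation*}
so the whole proof reduces to controlling the first sum by $\|b_{\rm n,PC}\|_2^2$.

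The key geometric step, which I anticipate is the one requiring the most care, is to bound $|\sqrt{V_{PC,ii}}-\sqrt{V_{PC,ii}^\sslash}|$ coordinatewise. I would factor $V_{PC}=WW^*$ (legal because $V_{PC}\succeq 0$), write $e_i=p_i+q_i$ with $p_i$ the orthogonal projection of $e_i$ on $\Range(A)$ and $q_i$ the projection on $\Range(A)^\perp$, and observe that $\sqrt{V_{PC,ii}}=\|W^*e_i\|_2$ while $\sqrt{V_{PC,ii}^\sslash}=\|W^*p_i\|_2$ (this last equality because $V_{PC}^\sslash=PV_{PC}P=(PW)(PW)^*$, so $e_i^*V_{PC}^\sslash e_i=\|W^*Pe_i\|_2^2$). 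The ordinary triangle inequality in $\C^n$ then yields
\begin{equation*}
\bigl|\sqrt{V_{PC,ii}}-\sqrt{V_{PC,ii}^\sslash}\bigr|=\bigl|\|W^*e_i\|_2-\|W^*p_i\|_2\bigr|\leq\|W^*q_i\|_2=\sqrt{q_i^*V_{PC}q_i}=\sqrt{q_i^*V_{PC}^\perp q_i},
\end{equation*}
the last equality because $q_i\in\Range(A)^\perp$ so $Pq_i=0$.

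Finally, summing the squares gives $\sum_i(\sqrt{V_{PC,ii}^\sslash}-\sqrt{V_{PC,ii}})^2\leq\sum_i q_i^*V_{PC}^\perp q_i=\Tr V_{PC}^\perp$, because $\{q_i\}_i$ generate $\Range(A)^\perp$ and $V_{PC}^\perp$ is supported there; more precisely $\sum_i q_i^*V_{PC}^\perp q_i=\sum_i e_i^*(I-P)V_{PC}^\perp(I-P)e_i=\Tr V_{PC}^\perp$. Combining with the bound $\Tr V_{PC}^\perp\leq\|b_{\rm n,PC}\|_2^2$ already derived in the proof of Lemma~\ref{lem:ecart_V_PC}, I obtain $\|b_{\rm n,PL}\|_2^2\leq 2\|b_{\rm n,PC}\|_2^2+2\|b_{\rm n,PC}\|_2^2=4\|b_{\rm n,PC}\|_2^2$, which is exactly the claim. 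The only slightly delicate step is the triangle-inequality manipulation with the square root factorization; once that is in place the rest is bookkeeping.
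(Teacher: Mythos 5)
Your proof is correct and follows essentially the same route as the paper: the same splitting $e_i=p_i+q_i$ into $\Range(A)$ and $\Range(A)^\perp$ components, the same key coordinatewise bound $|\sqrt{V_{PC\,ii}}-\sqrt{V_{PC\,ii}^\sslash}|\leq\sqrt{V_{PC\,ii}^\perp}$ (your factorization $V_{PC}=WW^*$ plus the Euclidean triangle inequality is just a repackaging of the paper's Cauchy--Schwarz step for positive matrices), and the same reuse of the bound $\Tr V_{PC}^\perp\leq\|b_{\rm n,PC}\|_2^2$ from the proof of Lemma~\ref{lem:ecart_V_PC}. The only cosmetic difference is the final aggregation---you use $(a+b)^2\leq 2a^2+2b^2$ where the paper uses the $\ell_2$ triangle inequality $\|b_{\rm n,PL}\|_2\leq\sqrt{\Tr V_{PC}^\perp}+\|b_{\rm n,PC}\|_2$---and both yield the same constant~$2$.
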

\begin{proof} Let $e_i$ be the $i$-th vector of $\C^n$'s canonical base. We set $e_i=f_i+g_i$ where $f_i\in\Range(A)$ and $g_i\in\Range(A)^\perp$.
\begin{align*}
V_{PC\,ii}&=e_i^*V_{PC}e_i\\
&=f_i^*V_{PC}^\sslash f_i+2\Re (f_i^*V_{PC}g_i)+g_i^*V_{PC}^\perp g_i\\
&=V_{PC\,ii}^\sslash+2\Re(f_i^*V_{PC}g_i)+V_{PC\,ii}^\perp
\end{align*}
Because $|f_i^*V_{PC}g_i|\leq\sqrt{f_i^*V_{PC}f_i}\sqrt{g_i^*V_{PC}g_i}=\sqrt{V_{PC\,ii}^\sslash}\sqrt{V_{PC\,ii}^\perp}$,
\begin{gather*}
(\sqrt{V_{PC\,ii}^\sslash}-\sqrt{V_{PC\,ii}^\perp})^2\leq V_{PC\,ii}\leq
(\sqrt{V_{PC\,ii}^\sslash}+\sqrt{V_{PC\,ii}^\perp})^2\\
\Rightarrow
\sqrt{V_{PC\,ii}^\sslash}-\sqrt{V_{PC\,ii}^\perp}\leq \sqrt{V_{PC\,ii}}\leq
\sqrt{V_{PC\,ii}^\sslash}+\sqrt{V_{PC\,ii}^\perp}
\end{gather*}
So
\begin{align*}
|b_{{\rm n,PL,}i}|&=|\sqrt{V_{PC\,ii}^\sslash}-|Ax_0|_i|\\
&\leq|\sqrt{V_{PC\,ii}^\sslash}-\sqrt{V_{PC\,ii}}|+|\sqrt{V_{PC\,ii}}-|Ax_0|_i|\\
&\leq\sqrt{V_{PC\,ii}^\perp}+b_{{\rm n,PC,}i}
\end{align*}
and, by \eqref{eq:tr_v_pc_perp},
\begin{align*}
\|b_{\rm n,PL}\|_2&\leq\|\left\{\sqrt{V_{PC\,ii}^\perp}\right\}_i\|_2+\|b_{\rm n,PC}\|_2\\
&=\sqrt{\Tr V_{PC}^\perp}+\|b_{\rm n,PC}\|_2\leq 2\|b_{\rm n,PC}\|_2
\end{align*}
which concludes the proof.
\end{proof}

%
%

\small{\bibliographystyle{plainnat}\bibsep 1ex
\bibliography{/Users/aspremon/Dropbox/Research/Biblio/MainPerso.bib}}
\end{document}